%

\documentclass[11pt,a4paper]{article}

\usepackage{amsmath}
\usepackage{amssymb}
\usepackage{amsfonts}

\usepackage{amsthm}
\usepackage{mathtools} 
\usepackage{enumerate}
\usepackage{xspace} 
\usepackage{graphics} 
\usepackage{graphicx}
\usepackage{pgfpages}
\usepackage{url}
\usepackage{booktabs}
\usepackage{verbatim}
\usepackage{hyperref}
\usepackage{breakurl}
\usepackage[margin=1.43in]{geometry}
\frenchspacing

%
\newcommand{\tluste}[1]{\text{\mathversion{bold}$\mathbf #1 $}}
\newcommand{\vr}[1]{{{#1}}}

\newcommand{\mace}[1]{{{#1}}}
\newcommand{\mna}[1]{{\mathcal{#1}}}

\newcommand{\omace}[1]{\mbox{$\overline{\mace{#1}}$}} 
\newcommand{\umace}[1]{\mbox{$\underline{\mace{#1}}$}} 
\newcommand{\imace}[1]{\tluste{#1}} 		
 
\newcommand{\smace}[1]{\tluste{#1}{}^S} 
\def\Mid#1{{#1}^c}		
\def\Rad#1{{#1}^\Delta}		
\def\Mag#1{\mathop{\mathrm{mag}}#1}	
\def\Mig#1{\mathop{\mathrm{mig}}#1}	
\newcommand{\ovr}[1]{\mbox{$\overline{\vr{#1}}$}} 
\newcommand{\uvr}[1]{\mbox{$\underline{\vr{#1}}$}}

\newcommand{\onum}[1]{\mbox{$\overline{{#1}}$}} 
\newcommand{\unum}[1]{\mbox{$\underline{{#1}}$}}

\newcommand{\ivr}[1]{\mbox{$\tluste{#1}$}} 

\newcommand{\inum}[1]{\mbox{$\tluste{#1}$}} 
\newcommand{\R}[0]{{\mathbb{R}}}

\newcommand{\IR}[0]{{\mathbb{IR}}}

\newcommand{\Ss}[0]{\Sigma}
\newcommand{\Ssp}[0]{\mbox{\large$\Sigma$}_{\mathrm{p}}}
\def\ihull{{\mbox{$\square$}}}


\newcommand{\mmid}[0]{;\,}		

\newcommand{\seznam}[1]{{\{1, \ldots, {#1}\}}}

\def\clqq{``}
\def\crqq{''}
\def\quo#1{\clqq{}#1\crqq{}}  

\newcommand{\minim}[2]{{\min {\{#1; \ {} #2 \}} }}
\newcommand{\maxim}[2]{{\max {\{#1; \ {} #2 \}} }}

\DeclareMathOperator{\sgn}{sgn}	
\DeclareMathOperator{\diag}{diag}	
\DeclareMathOperator{\rank}{rank}
\DeclareMathOperator{\adj}{adj}

\DeclareMathOperator{\rr}{r}

\newtheorem{theorem}{Theorem}

\newtheorem{corollary}{Corollary}
\newtheorem{conjecture}{Conjecture}
\theoremstyle{definition}


\begin{document}

\title{An Overview of Polynomially Computable Characteristics of Special Interval Matrices}

\author{
  Milan Hlad\'{i}k\footnote{
Charles University, Faculty  of  Mathematics  and  Physics,
Department of Applied Mathematics, 
Malostransk\'e n\'am.~25, 11800, Prague, Czech Republic, 
e-mail: \texttt{milan.hladik@matfyz.cz}
}}

\maketitle

\begin{abstract}
It is well known that many problems in interval computation are intractable, which restricts our attempts to solve large problems in reasonable time. This does not mean, however, that all problems are computationally hard. Identifying polynomially solvable classes thus belongs to important current trends. 
The purpose of this paper is to review some of such classes. In particular, we focus on several special interval matrices and investigate their convenient properties. We consider tridiagonal matrices, \{M,H,P,B\}-matrices, inverse M-matrices, inverse nonnegative matrices, nonnegative matrices, totally positive matrices and some others. 
We focus in particular on computing the range of the determinant, eigenvalues, singular values, and selected norms. Whenever possible, we state also formulae for determining the inverse matrix and the hull of the solution set of an interval system of linear equations.
We survey not only the known facts, but we present some new views as well.
\end{abstract}

\textbf{Keywords:}\textit{ Interval computation; computational complexity; tridiagonal matrix; M-matrix; H-matrix; P-matrix; inverse nonnegative matrix}

\section{Introduction}


Many problems in interval computation are computationally hard; see theoretic complexity surveys in \cite{KreLak1998,HorHla2017a}.
Nevertheless, matrices arising in practical problems are not random, but satisfy some special properties and have specific structures. Utilizing such particularities is often very convenient and can make tractable those problems that are hard on general. In this paper, we review such special matrices and easily computable characteristics.

\subsubsection*{General notation}
For a symmetric matrix $A\in\R^{n\times n}$, we denote its eigenvalues as $\lambda_{\max}(A)=\lambda_1(A)\geq\dots\geq\lambda_n(A)=\lambda_{\min}(A)$.
For any matrix $A\in\R^{n\times n}$, we use $\rho(A)$ for the spectral radius, and $\sigma_{\min}(A)$ and $\sigma_{\max}(A)$ the smallest and the largest singular value, respectively. Further, $\diag(z)$ stand for the diagonal matrix with entries $z_1,\dots,z_n$, the symbol $I_n$ is for the identity matrix of size $n$, and $e=(1,\dots,1)^T$ for an all-ones vector of convenient dimension. 
The $i$th row and the $j$th column of a matrix $A$ are denoted by $A_{i*}$ and $A_{*j}$, respectively.
Throughout the text, inequalities between vectors and matrices well as the absolute values and min/max functions are understood entrywise.

The regularity radius \cite{KreLak1998,PolRoh1993} of a nonsingular matrix $A\in\R^{n\times n}$ is the distance to the nearest singular matrix in the Chebyshev norm (componentwise maximum norm) and denoted
$$
\rr(A)\coloneqq
\min\{\delta\geq0\mmid \exists \mbox{ singular }B\in\R^{n\times n}:
  |a_{ij}-b_{ij}|\leq\delta\ \forall i,j\}.
$$
This value can be expressed as $\rr(A)=1/\|A^{-1}\|_{\infty,1}$, where 
$$
\|M\|_{\infty,1}\coloneqq\max_{\|x\|_{\infty}=1}\|Mx\|_1
$$
is the matrix norm induced by the vector $\infty$- and 1-norms.
Computing this norm is, however, an NP-hard problem on the set of symmetric rational M-matrices \cite{Fie2006,Roh2000}. The best known approximation is by means of semidefinite programming \cite{HarHla2016a}.

\subsubsection*{Interval notation}

An interval matrix is defined as
$$
\imace{A}\coloneqq\{A\in\R^{m\times n}\mmid \umace{A}\leq A\leq \omace{A}\},
$$
where $ \umace{A}$ and $\omace{A}$, $\umace{A}\leq\omace{A}$, are given matrices. The corresponding midpoint and the radius matrices are defined respectively as
$$
\Mid{A}\coloneqq\frac{1}{2}(\umace{A}+\omace{A}),\quad
\Rad{A}\coloneqq\frac{1}{2}(\omace{A}-\umace{A}).
$$
The set of all $m\times n$ interval matrices is denoted by $\IR^{m\times n}$, and intervals and interval vectors are considered as special cases of interval matrices. For interval arithmetic, we refer the reader, e.g., to Neumaier \cite{Neu1990}. 
Given $\imace{A}\in\IR^{n\times n}$ with $\Mid{A}$ and $\Rad{A}$ symmetric, we denote by $\smace{A}\coloneqq\{A\in\imace{A}\mmid A=A^T\}$ the corresponding symmetric interval matrix.

For a bounded set $S\subset\R^n$, the interval hull $\ihull S$ is the smallest enclosing interval vector, or more formally,  $\ihull S\coloneqq\cap\{ \ivr{v}\in\IR^n\mmid S\subseteq\ivr{v}\}$ .

Consider an interval system of linear equations $\imace{A}x=\ivr{b}$, where $\imace{A}\in\IR^{m\times n}$ and $\ivr{b}\in\R^m$. Its solutions set $\Ss$ is traditionally defined as the union of all solutions of realizations of interval coefficients, that is
$$
\Ss\coloneqq
\{x\in\R^n\mmid \exists A\in\imace{A},\,\exists b\in\ivr{b}:Ax=b\}.
$$
Consider any matrix property $\mathfrak{P}$. We say that an interval matrix $\imace{A}$ satisfies $\mathfrak{P}$ is every $A\in\imace{A}$ satisfies $\mathfrak{P}$. This applies in particular to regularity (every $A\in\imace{A}$ is nonsingular), positive definiteness, M-matrix property, nonnegativity and others.
Recall that checking whether an interval matrix is regular is a co-NP-hard problem \cite{Fie2006,KreLak1998,PolRoh1993}. 

For a real function $f\colon\R^{m\times n}\to\R$ and an interval matrix $\imace{A}\in\IR^{m\times n}$, the image of the interval matrix under the function is
$$
f(\imace{A})=\{f(A)\mmid A\in\imace{A}\}.
$$
In general, $f(\imace{A})$ needn't be an interval, but it is the case provided $f$ is continuous. Thus, for instance, $\det(\imace{A})$ gives the range of determinant of $\imace{A}$ or $\lambda_{\max}(\smace{A})$ gives the range of the largest eigenvalues of the symmetric interval matrix $\smace{A}$.

\section{Tridiagonal matrices}

Tridiagonal interval matrices have particularly nice properties and some NP-hard problems become polynomial in this class. Let $\imace{T}\in\IR^{n\times n}$ be a tridiagonal interval matrix, that is,  $\imace{T}_{ij}=0$ for $|i-j|>1$. Checking regularity of $\imace{T}$ can be performed in linear time (Bar-On et al. \cite{BarCod1996}).
However, there are still some open problems. Are polynomially solvable the following tasks?
\begin{itemize} 
\item
computing the exact range for the determinant,
\item
tight enclosure of the solution set of an interval linear system $\imace{T}x=\ivr{b}$,
\item
computing the eigenvalue sets of a symmetric tridiagonal interval matrix,
\item
computing  $\|\imace{T}\|_{\infty,1}$.
\end{itemize}

\section{M-matrices and  H-matrices}

Interval M-matrices and H-matrices are particularly convenient in the context of solving interval linear equations $\imace{A}x=\ivr{b}$.
Recall that $A\in\R^{n\times n}$ is an \emph{M-matrix} if $a_{ij}\leq0$ for every $i\not=j$ and $A^{-1}\geq0$. The condition can be equivalently formulated as any of the following conditions
\begin{itemize}
\item
all real eigenvalues are positive,
\item
real parts of all eigenvalues are positive,
\item
there is $v>0$ such that $Av>0$.
\end{itemize}
\cite{HorJoh1991}.
Due to the statement below, interval M-matrices constitute an easily verifiable regular interval matrices.

\begin{theorem}
An interval matrix $\imace{A}\in\IR^{n\times n}$ is an M-matrix if and only if $\umace{A}$ is an M-matrix and $\omace{A}_{ij}\leq0$ for all $i\not=j$.
\end{theorem}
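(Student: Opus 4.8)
The plan is to rely on the characterization of M-matrices stated above via the existence of a strictly positive vector $v$ with $Av>0$, combined with a simple monotonicity argument.

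The forward implication is immediate. If $\imace{A}$ is an M-matrix, then every $A\in\imace{A}$ is an M-matrix; in particular $\umace{A}\in\imace{A}$ is an M-matrix, and $\omace{A}\in\imace{A}$ is an M-matrix, so by definition $\omace{A}_{ij}\leq0$ for all $i\neq j$. Both asserted conditions follow.

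For the converse, assume $\umace{A}$ is an M-matrix and $\omace{A}_{ij}\leq0$ for all $i\neq j$, and take an arbitrary $A\in\imace{A}$; I must show $A$ is an M-matrix. First, for $i\neq j$ we have $a_{ij}\leq\omace{A}_{ij}\leq0$, so $A$ has non-positive off-diagonal entries. Next, since $\umace{A}$ is an M-matrix, by the third equivalent condition listed above there exists $v>0$ with $\umace{A}v>0$. Because $A\geq\umace{A}$ entrywise, the matrix $A-\umace{A}$ is nonnegative, and since $v>0$ this gives $(A-\umace{A})v\geq0$, hence $Av\geq\umace{A}v>0$. Thus $A$ has non-positive off-diagonal entries and admits a positive vector $v$ with $Av>0$, so by the same equivalence $A$ is an M-matrix. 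As $A\in\imace{A}$ was arbitrary, $\imace{A}$ is an M-matrix.

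The only real content is the monotonicity step $Av\geq\umace{A}v$, which works precisely because $v$ is \emph{strictly} positive, so the entrywise inequality $A\geq\umace{A}$ survives multiplication by $v$. I do not expect a genuine obstacle — the argument is short — but one must be careful to verify the sign-pattern (Z-matrix) condition for \emph{every} $A\in\imace{A}$, not merely for the endpoints, which is exactly the role of the hypothesis $\omace{A}_{ij}\leq0$ beyond $\umace{A}$ being an M-matrix.
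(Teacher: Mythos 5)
Your proof is correct and complete: the forward direction is immediate from the paper's convention that $\imace{A}$ has a property iff every $A\in\imace{A}$ does, and the converse correctly verifies the Z-matrix sign pattern for an arbitrary $A\in\imace{A}$ and then transfers the positive vector $v$ with $\umace{A}v>0$ via $A\geq\umace{A}$. The paper states this theorem without proof, so there is nothing to compare against, but your argument via the characterization \quo{there is $v>0$ with $Av>0$} is the standard one; the only cosmetic remark is that the monotonicity step $(A-\umace{A})v\geq0$ needs only $v\geq0$, not strict positivity, though strictness is of course what the M-matrix characterization supplies.
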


A matrix $A\in\R^{n\times n}$ is called an \emph{H-matrix}, if the so called comparison matrix $\langle A\rangle$ is an M-matrix, where $\langle A\rangle_{ii}=|a_{ii}|$ and $\langle A\rangle_{ij}=-|a_{ij}|$ for $i\not=j$. Speciasublasses of H-matrices were discussed, e.g., in Cvetkovi{\'c} et al.~\cite{CveKos2009}.

Also interval H-matrices are easy to characterize; see Neumaier \cite{Neu1990}. We have that $\imace{A}\in\IR^{n\times n}$ is an H-matrix if and only if $\langle \imace{A}\rangle$ is an M-matrix, where the notion of the comparison matrix is extended to interval matrices as follows
\begin{align*}
\langle \imace{A}\rangle_{ii}
 &=\Mig(\inum{a}_{ii})
  =\minim{|a|}{a\in\inum{a}_{ii}},\\
\langle \imace{A}\rangle_{ij}
 &=-\Mag(\inum{a}_{ii})
 =-\maxim{|a|}{a\in\inum{a}_{ij}},\quad i\not=j.
\end{align*}
Each diagonally dominant matrix is an H-matrix. So we do not investigate diagonally dominant matrices in particular since what we show for H-matrices holds for diagonally dominant matrices as well.

Each M-matrix is also an H-matrix, so the following results apply to both. By Alefeld \cite{Ale1977}, for an H-matrix $\imace{A}$, the interval Gaussian elimination can be carried out without any pivoting and does not fail. Moreover, for any H-matrix $\imace{A}$ we always find an LU decomposition \cite{Ale1977}. That is, 
there are lower and upper triangular interval matrices $\imace{L},\imace{U}\in\IR^{n\times n}$ such that the diagonal of $\imace{L}$ consists of ones, and $\imace{A}\subseteq\imace{L}\imace{U}$.

Provided that $\imace{A}$ is an M-matrix, and one of $0\in\ivr{b}$, $\uvr{b}\geq0$, or $\ovr{b}\leq0$ holds true, then the interval Gaussian elimination yields the interval hull of the solution set, i.e., $\ihull\Ss$; see \cite{BarNud1974,Bee1974} and Section~\ref{sInvNonneg} for a more general result. 
For a general H-matrix, this needn't be true, however, for any H-matrix $\imace{A}$,  interval hull of the solution set is polynomially solvable by the so called Hansen--Bliek--Rohn--Ning--Kearfott method; see, e.g., \cite{Fie2006,NinKea1997,Neu1999}.


A link between regularity and H-matrix property was given by Neumaier \cite[Prop.~4.1.7]{Neu1990}.

\begin{theorem}\label{thmHmatMidReg}
Let $\Mid{A}$ be an M-matrix. Then $\imace{A}$ is regular if and only if it is an H-matrix.
\end{theorem}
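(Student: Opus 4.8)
The plan is to prove both implications, with the harder direction being that regularity of $\imace{A}$ (together with $\Mid{A}$ being an M-matrix) forces the H-matrix property. For the easy direction, suppose $\imace{A}$ is an H-matrix; then $\langle\imace{A}\rangle$ is an M-matrix by the characterization quoted above (Neumaier), and since every member of an interval H-matrix is an H-matrix, hence nonsingular, $\imace{A}$ is regular. So the substance is the converse.

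For the converse, assume $\Mid{A}$ is an M-matrix and $\imace{A}$ is regular; I want to show $\langle\imace{A}\rangle$ is an M-matrix. The natural route is to produce a positive vector $v>0$ with $\langle\imace{A}\rangle v>0$, using the equivalent characterization of M-matrices listed earlier in the excerpt. Since $\Mid{A}$ is an M-matrix, there is $v>0$ with $(\Mid{A})v>0$; the diagonal of $\Mid{A}$ is positive and its off-diagonal entries are nonpositive, so $(\Mid{A})v = |\!\MID{A}_{ii}|v_i - \sum_{j\neq i}|\!\MID{A}_{ij}|v_j > 0$ for each $i$. The remaining work is to control how far the interval entries can deviate from the midpoint. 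If some off-diagonal interval $\inum{a}_{ij}$ straddled zero or widened enough, or a diagonal interval $\inum{a}_{ii}$ came close to zero, one could try to build a singular matrix in $\imace{A}$; the key step is to argue that regularity of $\imace{A}$ rules this out in precisely the way needed so that $\langle\imace{A}\rangle v>0$ still holds, i.e. $\Mig(\inum{a}_{ii})\,v_i > \sum_{j\neq i}\Mag(\inum{a}_{ij})\,v_j$.

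The cleanest way to make this rigorous is an intermediate-value / continuity argument: parametrize a path of matrices inside $\imace{A}$ from $\Mid{A}$ to a candidate "worst case" matrix $A^*$ whose $i$th row realizes $-\langle\imace{A}\rangle$ off the diagonal and $\pm\Mig(\inum{a}_{ii})$ on the diagonal (with signs chosen to make $\langle\imace{A}\rangle$ act on $v$); if $\langle\imace{A}\rangle v \not> 0$, then along this path the quantity $A(t)v$ changes sign in some coordinate, and one extracts from that a matrix in $\imace{A}$ annihilating (a perturbation of) $v$, contradicting regularity. One should also invoke that the midpoint being an M-matrix pins down the sign pattern so that the extremal matrices $\Mig$/$\Mag$ are attained consistently across the relevant realizations.

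The main obstacle I anticipate is the bookkeeping in this continuity/sign-change step: one must choose the realization in $\imace{A}$ coordinate by coordinate (different rows may need different extremal choices) while keeping the resulting matrix genuinely inside $\imace{A}$, and then convert a coordinate of $A^*v$ being $\leq 0$ into an honest singularity rather than merely a near-singularity. This is exactly the point where regularity of the whole interval matrix $\imace{A}$ — not just of $\Mid{A}$ — is essential, and getting the quantifiers right there is the crux. (The result being attributed to Neumaier \cite[Prop.~4.1.7]{Neu1990}, one may alternatively cite that proof; but the sketch above is the self-contained argument I would carry out.)
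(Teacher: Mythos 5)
The paper itself offers no proof here --- it simply cites Neumaier \cite[Prop.~4.1.7]{Neu1990} --- so your attempt has to stand on its own. Your easy direction (H-matrix $\Rightarrow$ regular) is fine. In the hard direction there is a genuine gap, and it sits exactly where you flag it: you never convert the failure of $\langle \imace{A}\rangle v>0$ into an actual singular matrix in $\imace{A}$. The fixed-vector route cannot do this as stated: if $A^*\in\imace{A}$ satisfies $(A^*v)_i\leq 0$ for some coordinate while $(\Mid{A}v)_i>0$, that says nothing about singularity of $A^*$ or of any intermediate matrix --- a matrix can map a fixed positive vector $v$ to a vector with mixed signs and still be nonsingular. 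To extract singularity you would have to vary $v$ along with the matrix, i.e.\ track an eigenpair, and your sketch does not do that; ``annihilating a perturbation of $v$'' is precisely the unproved step.

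The standard repair replaces the fixed vector by the Perron root of a Z-matrix path. Since $\Mid{A}$ is an M-matrix, its off-diagonal entries are nonpositive, hence $\Mag(\inum{a}_{ij})=|\Mid{a}_{ij}|+\Rad{a}_{ij}=-\unum{a}_{ij}$ for $i\neq j$, so $\langle\imace{A}\rangle$ agrees with $\umace{A}$ off the diagonal, and equals $\umace{A}$ whenever $\umace{A}$ has positive diagonal. Thus it suffices to show $\umace{A}$ is an M-matrix. Consider $A(t)=\Mid{A}-t\Rad{A}\in\imace{A}$ for $t\in[0,1]$; each $A(t)$ is a Z-matrix, so writing $A(t)=sI_n-B(t)$ with $s$ large and $B(t)\geq0$, the smallest real eigenvalue $\mu(t)=s-\rho(B(t))$ depends continuously on $t$, and $A(t)$ is an M-matrix iff $\mu(t)>0$, singular if $\mu(t)=0$. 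Now $\mu(0)>0$ because $\Mid{A}$ is an M-matrix; if $\umace{A}=A(1)$ were not an M-matrix (which covers in particular the case $\unum{a}_{ii}\leq0$ for some $i$), then $\mu(1)\leq0$, so $\mu(t_0)=0$ for some $t_0$, producing a singular matrix in $\imace{A}$ and contradicting regularity. This is the quantifier-correct version of the intermediate-value argument you were reaching for; without it (or an explicit appeal to Neumaier's proof), the converse direction is not established.
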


Notice that the assumption cannot be weakened to the assumption that $\Mid{A}$ is an H-matrix. For example, the interval matrix
$$
\imace{A}=\begin{pmatrix}[0,10]&1\\-1&10\end{pmatrix}
$$
is regular and its midpoint is an H-matrix. However, $\imace{A}$ itself is not an H-matrix, failing for the realization when the top left entry vanishes.

As a consequence, we have a result related to positive definiteness.
Checking positive definiteness of interval matrices is co-NP-hard \cite{KreLak1998,Roh1994}, so polynomial recognizable sub-classes are of interest.

\begin{theorem}
Let $\imace{A}\in\IR^{n\times n}$ be an H-matrix and $\Mid{A}$ positive definite. Then $\imace{A}$ is positive definite.
\end{theorem}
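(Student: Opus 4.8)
The plan is to combine the characterization of interval H-matrices (via the comparison matrix) with Theorem~\ref{thmHmatMidReg} and a connectedness argument for the spectrum. First I would observe that since $\imace{A}$ is an H-matrix, its comparison matrix $\langle\imace{A}\rangle$ is an M-matrix; in particular every $A\in\imace{A}$ is an H-matrix and hence nonsingular, so $\imace{A}$ is regular. Next, note that the hypothesis that $\Mid{A}$ is positive definite does \emph{not} immediately force symmetry, so I would first reduce to the symmetric picture: positive definiteness of a (not necessarily symmetric) matrix $A$ means $x^TAx>0$ for all $x\neq0$, equivalently that the symmetric part $\tfrac12(A+A^T)$ is positive definite. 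The key point is that for any $A\in\imace{A}$ the symmetric part lies in the symmetric interval matrix associated with $\tfrac12(\imace{A}+\imace{A}^T)$, and this symmetric interval matrix inherits an H-matrix-type diagonal dominance structure (its comparison matrix dominates $\langle\imace{A}\rangle$ entrywise in the relevant sense). So without loss of generality I would work with a symmetric interval matrix $\smace{B}$ that is an H-matrix and whose midpoint is positive definite, and show every $B\in\smace{B}$ is positive definite.

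The core of the argument is then a continuity/connectedness step. The set $\smace{B}$ is convex, hence path-connected; the map $B\mapsto\lambda_{\min}(B)$ is continuous on it; and $\lambda_{\min}(\Mid{B})>0$ by assumption. If some $B_0\in\smace{B}$ failed to be positive definite, i.e.\ $\lambda_{\min}(B_0)\leq0$, then along a path from $\Mid{B}$ to $B_0$ the function $\lambda_{\min}$ would take the value $0$ by the intermediate value theorem, producing a singular matrix in $\smace{B}\subseteq\imace{A}$. But $\imace{A}$ is an H-matrix, hence regular, a contradiction. (Alternatively, and perhaps more cleanly, one can invoke Theorem~\ref{thmHmatMidReg} in the direction already needed: an H-matrix is regular, so $0$ is never an eigenvalue of any matrix in the interval; combined with $\lambda_{\min}(\Mid{A})>0$ and connectedness of the eigenvalue range, all eigenvalues of all symmetric realizations stay positive.)

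I expect the main obstacle to be the reduction from the general (nonsymmetric) positive-definiteness hypothesis to the symmetric interval matrix, specifically verifying that $\tfrac12(\imace{A}+\imace{A}^T)$ — viewed as a symmetric interval matrix — is still an H-matrix when $\imace{A}$ is. This requires checking that symmetrizing the bounds does not destroy the M-matrix property of the comparison matrix: the diagonal comparison entries $\Mig$ can only stay the same or grow under intersection-type operations, while the off-diagonal magnitudes $\Mag$ of the symmetrized intervals are at most the maximum of the two original magnitudes, so the comparison matrix of the symmetrized family still dominates an M-matrix and is therefore itself an M-matrix. Once that bookkeeping is done, the connectedness argument is routine. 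A short remark at the end would note that the same proof template (H-matrix $\Rightarrow$ regular, plus connectedness of a continuous scalar invariant) is what underlies several of the other "midpoint has property $\mathfrak P$ $\Rightarrow$ whole interval matrix has property $\mathfrak P$" results for H-matrices.
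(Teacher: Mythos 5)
Your core strategy --- H-matrix $\Rightarrow$ regular, plus an intermediate-value argument on $\lambda_{\min}$ along a segment from $\Mid{A}$ that would otherwise produce a singular matrix --- is precisely the content of the result the paper simply cites (Rohn: regularity of $\imace{A}$ together with positive definiteness of $\Mid{A}$ implies positive definiteness of $\imace{A}$), so you are in effect reproving the cited lemma. The genuine gap sits exactly where you predicted the main obstacle would be: the reduction to the symmetric picture. Your claim that the comparison matrix of $\tfrac12(\imace{A}+\imace{A}^T)$ dominates $\langle\imace{A}\rangle$ entrywise is not correct; what one actually obtains is $\bigl\langle\tfrac12(\imace{A}+\imace{A}^T)\bigr\rangle\geq\tfrac12\bigl(\langle\imace{A}\rangle+\langle\imace{A}\rangle^T\bigr)$, and the sum of an M-matrix with its transpose need not be an M-matrix. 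Concretely, $M=\left(\begin{smallmatrix}1&-4&0\\0&1&-4\\0&0&1\end{smallmatrix}\right)$ is an M-matrix while $\tfrac12(M+M^T)$ has leading $2\times2$ minor $-3$; read as a degenerate interval matrix, this is an H-matrix whose symmetrization is not an H-matrix. Consequently your symmetrized family $\smace{B}$ need not be regular, and --- worse --- the inclusion $\smace{B}\subseteq\imace{A}$ that you use to turn the singular matrix into a contradiction with regularity of $\imace{A}$ is false in general, so the argument does not close.

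The repair is to recognize that the statement only holds (and the cited result only applies) when $\Mid{A}$ and $\Rad{A}$ are symmetric, which the paper tacitly assumes here. Without that assumption the theorem itself fails: take $\Mid{A}=I_3$ and $\Rad{A}$ with the only nonzero entries $\Rad{A}_{12}=\Rad{A}_{23}=4$; then $\langle\imace{A}\rangle$ is the upper triangular M-matrix $M$ above, so $\imace{A}$ is an H-matrix with positive definite midpoint, yet the realization $A$ with $a_{12}=a_{23}=-4$ gives $x^TAx=-2<0$ for $x=(1,1,0)^T$. Under the symmetry assumption your obstacle evaporates: if $A\in\imace{A}$ then $A^T\in\imace{A}$, and by convexity $\tfrac12(A+A^T)\in\imace{A}$, so the entire segment from $\Mid{A}$ to the symmetric part lies in $\imace{A}$ itself and regularity of $\imace{A}$ alone yields the contradiction --- no H-matrix bookkeeping for a separate symmetrized interval is needed. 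With that correction, your connectedness argument becomes a valid self-contained proof, whereas the paper disposes of the step by citation.
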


\begin{proof}
By \cite{KreLak1998,Roh1994b}, positive definiteness of $\Mid{A}$ and regularity of $\imace{A}$ implies positive definiteness of  $\imace{A}$.
\end{proof}

\begin{theorem}
Let $\Mid{A}$ be a (symmetric) positive definite M-matrix. Then $\imace{A}$ is positive definite if and only if it is an H-matrix.
\end{theorem}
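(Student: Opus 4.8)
The statement is a direct corollary of Theorem~\ref{thmHmatMidReg} combined with the theorem immediately preceding it, so the plan is simply to assemble these two ingredients, one for each implication, and to check that the hypotheses line up.

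For the forward implication, I would begin from the remark that positive definiteness of $\imace{A}$ forces every $A\in\imace{A}$ to be nonsingular: if $A$ is positive definite and $x\neq 0$, then $x^TAx>0$, hence $Ax\neq 0$. Thus $\imace{A}$ is regular. Since $\Mid{A}$ is by hypothesis an M-matrix, Theorem~\ref{thmHmatMidReg} applies and yields that $\imace{A}$ is an H-matrix.

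For the reverse implication, I would simply invoke the previous theorem: if $\imace{A}$ is an H-matrix and $\Mid{A}$ is positive definite, then $\imace{A}$ is positive definite. Note that this direction uses only positive definiteness of $\Mid{A}$, not that $\Mid{A}$ is an M-matrix; the M-matrix hypothesis enters solely in the forward direction, to unlock Theorem~\ref{thmHmatMidReg}. (Recall also that a symmetric M-matrix is automatically positive definite, so for symmetric $\Mid{A}$ the two stated properties of the midpoint reduce to one.)

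There is no genuine obstacle here beyond bookkeeping; the only point deserving a sentence is the parenthetical \quo{symmetric}. If positive definiteness is read in the usual sense ($x^TAx>0$ for all $x\neq 0$, which depends only on the symmetric part), both cited theorems apply verbatim. If instead one works with the symmetric interval matrix $\smace{A}$, the argument is unchanged: in the forward direction, regularity of $\imace{A}$ implies regularity of every symmetric realization, and in the reverse direction the previous theorem delivers positive definiteness of all realizations, symmetric ones in particular. Either way the two implications above complete the proof.
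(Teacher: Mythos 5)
Your proof is correct and uses essentially the same ingredients as the paper: the paper invokes the Rohn result that, for positive definite $\Mid{A}$, positive definiteness of $\imace{A}$ is equivalent to regularity, and then applies Theorem~\ref{thmHmatMidReg}; you split this into the two implications, handling one via the preceding theorem (which rests on the same Rohn result) and the other via the elementary observation that positive definite matrices are nonsingular followed by Theorem~\ref{thmHmatMidReg}. Your parenthetical remarks on the redundancy of the hypotheses for symmetric $\Mid{A}$ are accurate but not needed.
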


\begin{proof}
By \cite{KreLak1998,Roh1994b}, under the assumption of positive definiteness of $\Mid{A}$, we have that $\imace{A}$ is positive definite if and only if it is regular, which is equivalent to H-matrix property by Theorem~\ref{thmHmatMidReg}.
\end{proof}

\begin{theorem}\label{thmMmatDet}
Let $\imace{A}\in\IR^{n\times n}$ be an M-matrix. Then $\det(\imace{A})=[\det(\umace{A}),\det(\omace{A})]$.
\end{theorem}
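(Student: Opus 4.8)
The plan is to prove that, on the box $\imace{A}$, the determinant is a nondecreasing function of every matrix entry; then its minimum over $\imace{A}$ is attained at $\umace{A}$ and its maximum at $\omace{A}$, and continuity of $\det$ together with connectedness of $\imace{A}$ forces the range to be exactly the closed interval between these two values.

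First I would unwind the hypothesis: $\imace{A}$ being an M-matrix means, by definition, that every $A\in\imace{A}$ is an M-matrix. Consequently each such $A$ satisfies $A^{-1}\geq0$, and since the real parts of all eigenvalues of an M-matrix are positive while non-real eigenvalues come in conjugate pairs, $\det(A)=\prod_i\lambda_i(A)>0$. In particular $\det$ is strictly positive (hence nonvanishing) throughout $\imace{A}$.

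The key step is the sign of the partial derivatives. By the Jacobi formula, for any $A\in\imace{A}$ and any indices $i,j$,
$$
\frac{\partial}{\partial a_{ij}}\det(A)=\adj(A)_{ji}=\det(A)\,(A^{-1})_{ji}.
$$
Both factors are nonnegative --- $\det(A)>0$ and $A^{-1}\geq0$ because $A$ is an M-matrix --- so $\partial\det(A)/\partial a_{ij}\geq0$ at every point of $\imace{A}$ and for all $i,j$. Fixing any $A\in\imace{A}$ and joining $\umace{A}$ to $A$ to $\omace{A}$ by a path that changes one entry at a time (which stays inside $\imace{A}$, as $\imace{A}$ is a Cartesian product of intervals), integration of this derivative yields $\det(\umace{A})\leq\det(A)\leq\det(\omace{A})$. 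Hence $\det$ attains its extrema over $\imace{A}$ at $\umace{A}$ and $\omace{A}$; since $\det$ is continuous and $\imace{A}$ is connected, $\det(\imace{A})$ is an interval, so $\det(\imace{A})=[\det(\umace{A}),\det(\omace{A})]$.

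I expect no serious obstacle here; the one point needing care is that the monotonicity of $\det$ in each entry must hold uniformly over the whole box, and this is precisely what the assumption that $\imace{A}$ (not merely $\Mid{A}$ or its corner matrices) is an M-matrix provides, since it guarantees $A^{-1}\geq0$ --- and thus $\partial\det/\partial a_{ij}\geq0$ --- at every realization $A\in\imace{A}$, not just at one.
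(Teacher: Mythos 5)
Your proposal is correct and follows essentially the same route as the paper: the paper's proof also invokes the Jacobi formula $\partial\det(A)/\partial A=\det(A)A^{-T}$ and the nonnegativity of both factors for an M-matrix to conclude that the determinant is nondecreasing in each entry. You merely supply the supporting details (positivity of $\det(A)$, the entrywise monotone path argument, and connectedness giving that the range is an interval) that the paper leaves implicit.
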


\begin{proof}
The derivative of the determinant $\det(A)$ is $\det(A)A^{-T}$. For an M-matrix both the determinant and the inverse are nonnegative, so the determinant is a nondecreasing function in each component.
\end{proof}

Since each M-matrix is inverse nonnegative, Theorems~\ref{thmInvNonnegSvdMin} and~\ref{thmInvNonnegDet} from Section~\ref{sInvNonneg} below are valid also for interval M-matrices.

\section{Inverse nonnegative matrices}\label{sInvNonneg}

Besides the generalization to H-matrices, M-matrices can also be extended to \emph{inverse nonnegative matrices}, that is, matrices $A\in\R^{n\times n}$ such that $A^{-1}\geq0$. Interval inverse nonnegativity is still easy to characterize just by reduction to two point matrices $\umace{A}$ and $\omace{A}$ only; see Kuttler \cite{Kut1971}.

\begin{theorem}
An interval matrix $\imace{A}\in\IR^{n\times n}$ is inverse nonnegative if and only if $\umace{A}^{-1}\geq0$ and $\omace{A}^{-1}\geq0$.
\end{theorem}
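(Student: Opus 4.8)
The plan is to show that inverse nonnegativity of $\imace{A}$ is equivalent to inverse nonnegativity of the two extreme matrices $\umace{A}$ and $\omace{A}$. The forward implication is trivial: if every $A\in\imace{A}$ has $A^{-1}\geq0$, then in particular $\umace{A}$ and $\omace{A}$ (which belong to $\imace{A}$) are inverse nonnegative. So the entire content is the converse: assuming $\umace{A}^{-1}\geq0$ and $\omace{A}^{-1}\geq0$, deduce that every $A\in\imace{A}$ is nonsingular with $A^{-1}\geq0$.

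The key structural observation I would use is monotonicity of the inverse with respect to the componentwise order, valid on the cone of inverse nonnegative matrices. Concretely, if $A\leq B$ entrywise and both $A,B$ are nonsingular with $A^{-1}\geq0$ and $B^{-1}\geq0$, then $B^{-1}\leq A^{-1}$; this follows from the identity $A^{-1}-B^{-1}=A^{-1}(B-A)B^{-1}$, a product/difference of nonnegative matrices. So first I would establish (or cite) that the set of inverse nonnegative matrices is, in a suitable sense, ``order-closed between its endpoints''. The real work is to bridge from the two endpoints to an \emph{arbitrary} $A$ with $\umace{A}\leq A\leq\omace{A}$: I would argue along the line segment, or more robustly via a continuity/connectedness argument. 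Consider the segment $A(t)=\umace{A}+t(\omace{A}-\umace{A})$, $t\in[0,1]$, which lies in $\imace{A}$ and joins the two endpoints; one shows the set of $t$ for which $A(t)$ is nonsingular with nonnegative inverse is both open (by continuity of the inverse) and closed (the limit of nonnegative matrices is nonnegative, and one must rule out singularity at the boundary), hence all of $[0,1]$. To handle a general $A\in\imace{A}$ rather than just the segment, I would instead perturb one entry at a time: changing a single entry $a_{ij}$ within its interval while keeping the others fixed keeps the matrix in $\imace{A}$, and the determinant is affine (indeed linear) in that entry, so the inverse cannot become singular in the interior without the nonnegativity being violated first at an endpoint configuration — iterating over all $n^2$ entries reduces $A$ to $\umace{A}$ through inverse nonnegative matrices.

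The main obstacle is ruling out singularity: one needs to know that $A(t)$ (or an intermediate one-entry perturbation) cannot hit a singular matrix as $t$ ranges over $[0,1]$. The cleanest way around this is to invoke the fact that a convex combination, or more precisely a componentwise-monotone family, of matrices sandwiched between two inverse nonnegative matrices inherits regularity — this is essentially the argument that an interval matrix with $\umace{A}^{-1}\geq0$ and $\omace{A}^{-1}\geq0$ is regular, which in turn can be derived from the classical characterization of regularity via the nonvanishing of $\det$ on the segment together with the sign-constancy forced by $A^{-1}(\omace{A}-\umace{A})\umace{A}^{-1}\geq0$. I would phrase the final argument as: for any $A\in\imace{A}$, write the chain $\umace{A}\leq A\leq\omace{A}$; nonsingularity of $A$ follows because $\det$ restricted to the order interval cannot change sign (it has the same sign at both endpoints, and the one-entry-at-a-time reduction keeps it affine in each step, so a zero in between would force the inverse to blow up and lose nonnegativity, contradiction); then monotonicity of the inverse gives $\omace{A}^{-1}\leq A^{-1}\leq\umace{A}^{-1}$, and since the outer terms are $\geq0$ we get $A^{-1}\geq0$, completing the proof.
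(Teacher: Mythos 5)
The paper itself offers no proof of this statement (it is quoted as Kuttler's theorem), so your proposal must stand on its own. Its skeleton is sound: the \quo{only if} direction is indeed trivial, the identity $A^{-1}-B^{-1}=A^{-1}(B-A)B^{-1}$ is the right monotonicity tool, and you correctly isolate the crux, namely that an arbitrary $A$ with $\umace{A}\leq A\leq\omace{A}$ is nonsingular. But every route you offer through that crux has a hole. (a) In the connectedness argument, the set of $t$ with $A(t)$ nonsingular and $A(t)^{-1}\geq0$ is \emph{not} open \quo{by continuity of the inverse}: nonnegativity of the inverse is a closed condition, and a zero entry of $A(t_0)^{-1}$ can become negative for $t$ arbitrarily close to $t_0$; proving openness is essentially a local form of the theorem itself. (b) The claimed contradiction \quo{a zero in between would force the inverse to blow up and lose nonnegativity} is a non sequitur: a family of nonnegative inverses can blow up while remaining nonnegative. (The blow-up \emph{can} be excluded, but via the uniform bound $A(t)^{-1}\leq\umace{A}^{-1}$ obtained from your own monotonicity identity; that yields closedness, not the missing openness.) (c) The one-entry-at-a-time induction is circular: to run the affine-determinant step you must already know that the intermediate matrices, which differ from $\umace{A}$ in several entries, are inverse nonnegative — which is the statement being proved. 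Along the full segment, $\det A(t)$ is a degree-$n$ polynomial, not affine, so equal signs at the endpoints do not preclude interior zeros. (d) The final assembly is also circular: your monotonicity lemma requires \emph{both} matrices to be inverse nonnegative, so you cannot invoke $\omace{A}^{-1}\leq A^{-1}$ in order to conclude $A^{-1}\geq0$.

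The standard argument (Kuttler; see also Neumaier's book) closes the gap algebraically rather than topologically. For $A\in\imace{A}$ write $\omace{A}^{-1}A=I-P$ with $P=\omace{A}^{-1}(\omace{A}-A)$, so that $0\leq P\leq Q:=\omace{A}^{-1}(\omace{A}-\umace{A})$. The matrix $I-Q=\omace{A}^{-1}\umace{A}$ has inverse $\umace{A}^{-1}\omace{A}=I+\umace{A}^{-1}(\omace{A}-\umace{A})\geq0$; since $Q\geq0$, the matrix $I-Q$ has nonpositive off-diagonal entries and a nonnegative inverse, hence is an M-matrix and $\rho(Q)<1$. By Perron--Frobenius monotonicity, $\rho(P)\leq\rho(Q)<1$, so $I-P$ is nonsingular with $(I-P)^{-1}=\sum_{k\geq0}P^k\geq0$, and therefore $A=\omace{A}(I-P)$ is nonsingular with $A^{-1}=(I-P)^{-1}\omace{A}^{-1}\geq0$. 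This delivers regularity and nonnegativity of the inverse simultaneously, which is exactly the step your proposal leaves unproved.
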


For inverse nonnegative matrices we can easily determine the range of their inverses. The theorem below says that $\ihull\{A^{-1}\mmid A\in\imace{A}\}=[\omace{A}^{-1},\umace{A}^{-1}]$.

\begin{theorem}\label{thmInvNonnegAinvBounds}
If $\imace{A}$ is inverse nonnegative, then $\omace{A}^{-1}\leq A^{-1}\leq \umace{A}^{-1}$ for every $A\in\imace{A}$.
\end{theorem}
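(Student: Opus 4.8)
The plan is to exploit monotonicity of the matrix inverse on the cone of inverse nonnegative matrices. Fix $A\in\imace{A}$. Since $\imace{A}$ is inverse nonnegative, every realization in $\imace{A}$ has a nonnegative inverse; in particular $\umace{A}^{-1}\geq0$, $\omace{A}^{-1}\geq0$, and $A^{-1}\geq0$. I would first establish the elementary fact that if $B,C$ are two nonsingular matrices with $B\leq C$ and both $B^{-1}\geq0$, $C^{-1}\geq0$, then $C^{-1}\leq B^{-1}$. This follows from the identity $B^{-1}-C^{-1}=B^{-1}(C-B)C^{-1}$: the right-hand side is a product of a nonnegative matrix, a nonnegative matrix $C-B\geq0$, and a nonnegative matrix, hence nonnegative.

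Applying this twice gives the result: from $\umace{A}\leq A$ together with $\umace{A}^{-1}\geq0$ and $A^{-1}\geq0$ we get $A^{-1}\leq\umace{A}^{-1}$; from $A\leq\omace{A}$ together with $A^{-1}\geq0$ and $\omace{A}^{-1}\geq0$ we get $\omace{A}^{-1}\leq A^{-1}$. Combining, $\omace{A}^{-1}\leq A^{-1}\leq\umace{A}^{-1}$ for every $A\in\imace{A}$, which is exactly the claim. The remark that this yields the interval hull then follows because the two bounds $\umace{A}^{-1}$ and $\omace{A}^{-1}$ are themselves attained (by $A=\umace{A}$ and $A=\omace{A}$), so no smaller box can contain the set $\{A^{-1}\mmid A\in\imace{A}\}$.

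The only point needing a little care — and the place I expect a reader might want more detail — is the justification that every $A\in\imace{A}$ is nonsingular, so that $A^{-1}$ makes sense throughout; this is immediate from the definition of $\imace{A}$ being inverse nonnegative (which presupposes $A^{-1}\geq0$, hence $A$ nonsingular, for each realization), so there is really no obstacle. The argument is otherwise just the inverse-order-reversing property of nonnegative inverses, applied componentwise.
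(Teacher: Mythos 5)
Your proof is correct. The paper states Theorem~\ref{thmInvNonnegAinvBounds} without giving a proof (the result goes back to Kuttler's characterization of inverse nonnegative interval matrices), and your argument --- order reversal of the inverse via the identity $B^{-1}-C^{-1}=B^{-1}(C-B)C^{-1}$, applied to the pairs $(\umace{A},A)$ and $(A,\omace{A})$ --- is exactly the standard one, with all hypotheses (nonsingularity and nonnegativity of the inverses of $\umace{A}$, $A$, $\omace{A}$) correctly accounted for.
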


When an interval matrix $\imace{A}\in\IR^{n\times n}$ is inverse nonnegative, then interval systems $\imace{A}x=\ivr{b}$ are efficiently solvable. The interval hull of the solution se reads
\begin{itemize}
\item
$\ihull\Ss=[\omace{A}^{-1}\uvr{b},\umace{A}^{-1}\ovr{b}]$
 when $\uvr{b}\geq0$,
\item
$\ihull\Ss=[\umace{A}^{-1}\uvr{b},\omace{A}^{-1}\ovr{b}]$
 when $\ovr{b}\leq0$,
\item
$\ihull\Ss=[\umace{A}^{-1}\uvr{b},\umace{A}^{-1}\ovr{b}]$
 when $0\in\ivr{b}$.
\end{itemize}
In the other cases, $\ihull\Ss$ is still polynomially computable, but has no such an explicit formulation; see Neumaier \cite{Neu1990}.

For symmetric inverse nonnegative matrices we have also a simple formula for its smallest eigenvalue. Notice that for the largest eigenvalue an analogy is not valid in general.

\begin{theorem}\label{thmInvNonnegSymLmin}
Let $\imace{A}$ be inverse nonnegative and both $\Rad{A}$ and $\Mid{A}$ symmetric. Then $\lambda_{\min}(\smace{A})=[\lambda_{\min}(\umace{A}),\lambda_{\min}(\omace{A})]$.
\end{theorem}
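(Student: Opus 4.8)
The plan is to reduce the eigenvalue statement to the bounds on inverses from Theorem~\ref{thmInvNonnegAinvBounds} combined with monotonicity of the spectral radius on nonnegative matrices. First, since $\Mid{A}$ and $\Rad{A}$ are symmetric, both $\umace{A}=\Mid{A}-\Rad{A}$ and $\omace{A}=\Mid{A}+\Rad{A}$ are symmetric and hence lie in $\smace{A}$, so $\lambda_{\min}(\umace{A})$ and $\lambda_{\min}(\omace{A})$ are actually attained values of $\lambda_{\min}$ on $\smace{A}$. Moreover $A\mapsto\lambda_{\min}(A)$ is continuous and $\smace{A}$ is convex, hence connected, so $\lambda_{\min}(\smace{A})$ is a compact interval. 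It therefore suffices to prove, for every symmetric $A\in\imace{A}$,
$$
\lambda_{\min}(\umace{A})\le\lambda_{\min}(A)\le\lambda_{\min}(\omace{A}),
$$
which simultaneously shows that the asserted right-hand side is a well-formed interval.

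I would then pass to inverses. For symmetric $A\in\imace{A}$ the matrix $A^{-1}$ is symmetric and entrywise nonnegative, so by Perron--Frobenius $\rho(A^{-1})$ is an eigenvalue of $A^{-1}$; being at the same time the largest in modulus among its (real) eigenvalues, it equals $\lambda_{\max}(A^{-1})$. Since the eigenvalues of $A^{-1}$ are the reciprocals of those of $A$, this gives $\lambda_{\min}(A)=1/\rho(A^{-1})$. By Theorem~\ref{thmInvNonnegAinvBounds} we have $0\le\omace{A}^{-1}\le A^{-1}\le\umace{A}^{-1}$ entrywise, and the spectral radius is monotone under the entrywise order on nonnegative matrices, so
$$
\rho(\omace{A}^{-1})\le\rho(A^{-1})\le\rho(\umace{A}^{-1}).
$$
Taking reciprocals (order-reversing on the positive reals) and inserting the identity $\lambda_{\min}(\cdot)=1/\rho((\cdot)^{-1})$ for $\umace{A}$, $A$, and $\omace{A}$ yields the required chain, which together with the first paragraph completes the proof.

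The step I expect to be the main obstacle is the identity $\lambda_{\min}(A)=1/\rho(A^{-1})$: it needs the Perron root of $A^{-1}$ to be both its largest eigenvalue \emph{and} positive, i.e.\ it needs $A$ to be positive definite. A symmetric inverse nonnegative matrix need not be positive definite --- $\bigl(\begin{smallmatrix}0&1\\1&0\end{smallmatrix}\bigr)$ is symmetric, equals its own nonnegative inverse, and is indefinite --- and in the indefinite case $\rho(A^{-1})$ governs the smallest \emph{positive} eigenvalue of $A$ rather than $\lambda_{\min}(A)$, so the monotonicity above acts on the wrong eigenvalue and the roles of $\umace{A}$ and $\omace{A}$ interchange. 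I would resolve this by noting that $\smace{A}$ is connected and all its members are nonsingular, hence of constant inertia: if that inertia is $(n,0,0)$ --- equivalently, if $\Mid{A}$ (or any single member) is positive definite --- the proof goes through as above, and it is this positive-definiteness assumption that makes the stated interval $[\lambda_{\min}(\umace{A}),\lambda_{\min}(\omace{A})]$ correctly ordered.
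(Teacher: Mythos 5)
Your argument is the paper's own proof, made explicit: the paper likewise writes $\lambda_{\min}(A)=\lambda_{\max}^{-1}(A^{-1})$ and then applies Perron--Frobenius monotonicity of the spectral radius to the chain $0\leq\omace{A}^{-1}\leq A^{-1}\leq\umace{A}^{-1}$ from Theorem~\ref{thmInvNonnegAinvBounds}. The ``main obstacle'' you flag is not a defect of your write-up but a genuine gap in the paper: the identity $\lambda_{\min}(A)=1/\rho(A^{-1})$ does require $A$ to be positive definite, a symmetric inverse nonnegative matrix need not be positive definite, and without that hypothesis the theorem itself is false. Concretely, take
$$
\umace{A}=\begin{pmatrix}0&1\\1&0\end{pmatrix},\qquad
\omace{A}=\begin{pmatrix}0&2\\2&0\end{pmatrix}.
$$
Every $A=\left(\begin{smallmatrix}0&b\\c&0\end{smallmatrix}\right)\in\imace{A}$ has $A^{-1}=\left(\begin{smallmatrix}0&1/c\\1/b&0\end{smallmatrix}\right)\geq0$, so $\imace{A}$ is inverse nonnegative with $\Mid{A}$ and $\Rad{A}$ symmetric; yet $\lambda_{\min}(\umace{A})=-1$ and $\lambda_{\min}(\omace{A})=-2$, so the asserted interval has its endpoints in the wrong order, while the true range is $\lambda_{\min}(\smace{A})=[-2,-1]$. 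Your proposed repair --- add the hypothesis that $\Mid{A}$ (equivalently, by your constant-inertia observation, any one member of $\smace{A}$) is positive definite --- is exactly what is needed, and under it your proof is complete and correct.

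Two minor points. First, your parenthetical claim that in the indefinite case ``the roles of $\umace{A}$ and $\omace{A}$ interchange'' is true in the example above but is not established in general: there $\rho(A^{-1})$ controls the smallest \emph{positive} eigenvalue of $A$ and gives no direct handle on $\lambda_{\min}(A)$. Second, the neighbouring Theorem~\ref{thmInvNonnegSvdMin} is unaffected by this issue, since $\sigma_{\min}(A)=1/\sigma_{\max}(A^{-1})$ needs no definiteness and $\sigma_{\max}$ is monotone on entrywise-nonnegative matrices.
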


\begin{proof}
Let $A\in\smace{A}$. Then by the Perron theorem and theory of nonnegative matrices, 
$\lambda_{\min}(A)
=\lambda^{-1}_{\max}(A^{-1})
\geq\lambda^{-1}_{\max}(\umace{A}^{-1})
=\lambda_{\min}(\umace{A}),
$
and similarly for the upper bound.
\end{proof}

Analogously, we obtain:

\begin{theorem}\label{thmInvNonnegSvdMin}
If $\imace{A}$ is inverse nonnegative, then $\sigma_{\min}(\imace{A})=[\sigma_{\min}(\umace{A}),\sigma_{\min}(\omace{A})]$.
\end{theorem}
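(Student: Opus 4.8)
The plan is to mimic the argument behind Theorem~\ref{thmInvNonnegSymLmin}, using the identity $\sigma_{\min}(A)=1/\sigma_{\max}(A^{-1})$, which is valid for every nonsingular $A$. Note first that every $A\in\imace{A}$ (in particular the corner matrices $\umace{A},\omace{A}\in\imace{A}$) is nonsingular, since $\imace{A}$ is inverse nonnegative; hence all the inverses and reciprocals below make sense. Fix an arbitrary $A\in\imace{A}$. By Theorem~\ref{thmInvNonnegAinvBounds} we have the entrywise chain
$$0\leq\omace{A}^{-1}\leq A^{-1}\leq\umace{A}^{-1}.$$

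The key ingredient is monotonicity of the largest singular value on nonnegative matrices: if $0\leq B\leq C$ entrywise, then $\sigma_{\max}(B)\leq\sigma_{\max}(C)$. I would prove this by observing that $0\leq B^TB\leq C^TC$ entrywise, because each entry of $B^TB$ is a sum of products of nonnegative numbers, each dominated by the corresponding term for $C$. Since $B^TB$ and $C^TC$ are symmetric and nonnegative, $\sigma_{\max}(M)^2=\lambda_{\max}(M^TM)=\rho(M^TM)$ for $M\in\{B,C\}$, and the claim follows from the Perron--Frobenius monotonicity $\rho(B^TB)\leq\rho(C^TC)$ for nonnegative matrices (exactly the tool already used for the eigenvalue statement).

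Applying this lemma along the chain gives $\sigma_{\max}(\omace{A}^{-1})\leq\sigma_{\max}(A^{-1})\leq\sigma_{\max}(\umace{A}^{-1})$, and taking reciprocals yields $\sigma_{\min}(\umace{A})\leq\sigma_{\min}(A)\leq\sigma_{\min}(\omace{A})$ for every $A\in\imace{A}$. The function $\sigma_{\min}$ is continuous and $\imace{A}$ is connected, so $\sigma_{\min}(\imace{A})$ is an interval; its two endpoints are attained at the corner matrices $\umace{A}$ and $\omace{A}$ (this in particular confirms $\sigma_{\min}(\umace{A})\leq\sigma_{\min}(\omace{A})$, so the bracket is a legitimate interval). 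Therefore $\sigma_{\min}(\imace{A})=[\sigma_{\min}(\umace{A}),\sigma_{\min}(\omace{A})]$.

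I expect the only non-routine point to be the spectral-norm monotonicity lemma; the rest is bookkeeping parallel to the proof of Theorem~\ref{thmInvNonnegSymLmin}. The one thing to be careful about is that inverse nonnegativity of $\imace{A}$ must be used up front to guarantee nonsingularity of \emph{every} realization, so that the identity $\sigma_{\min}(\cdot)=1/\sigma_{\max}(\cdot^{-1})$ and the passage to reciprocals are justified throughout.
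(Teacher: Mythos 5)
Your proof is correct and is essentially the argument the paper intends: the paper gives no explicit proof, merely writing \emph{Analogously, we obtain} after Theorem~\ref{thmInvNonnegSymLmin}, and the analogy is exactly the one you carry out, via $\sigma_{\min}(A)=1/\sigma_{\max}(A^{-1})$, the bounds $0\leq\omace{A}^{-1}\leq A^{-1}\leq\umace{A}^{-1}$ from Theorem~\ref{thmInvNonnegAinvBounds}, and Perron--Frobenius monotonicity of $\rho(\cdot)$ applied to $B^TB\leq C^TC$. Your explicit justification of the spectral-norm monotonicity lemma is a welcome addition that the paper leaves implicit.
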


\begin{theorem}\label{thmInvNonnegDet}
If $\imace{A}$ is inverse nonnegative, then $\det(\imace{A})=[\min(\mna{D}),\max(\mna{D})]$, where $\mna{D}=\{\det(\umace{A}),\det(\omace{A})\}$.
\end{theorem}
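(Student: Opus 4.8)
The plan is to mimic the proof of Theorem~\ref{thmMmatDet}, replacing ``nonnegativity of the determinant'' by a sign that is constant but possibly negative on $\imace{A}$. First I would observe that since $\imace{A}$ is inverse nonnegative, in particular every $A\in\imace{A}$ is nonsingular, so $\det(A)\not=0$ throughout $\imace{A}$; since $\imace{A}$ is connected and $\det$ is continuous, $\det$ has constant sign $s\in\{-1,+1\}$ on $\imace{A}$. The case $s=+1$ is exactly the situation already handled for M-matrices: the gradient of $\det$ at $A$ is $\det(A)\,A^{-T}$, which is (entrywise) nonnegative because $\det(A)>0$ and $A^{-1}\geq0$, hence $\det$ is nondecreasing in each entry on the box $\imace{A}$, so its minimum is attained at $\umace{A}$ and its maximum at $\omace{A}$.

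Next I would treat the case $s=-1$ symmetrically: now $\det(A)<0$ while $A^{-T}\geq0$, so the gradient $\det(A)A^{-T}$ is entrywise nonpositive, meaning $\det$ is nonincreasing in each entry; therefore its minimum over $\imace{A}$ is attained at $\omace{A}$ and its maximum at $\umace{A}$. In either case the extreme values of $\det$ over $\imace{A}$ are $\det(\umace{A})$ and $\det(\omace{A})$, and since the range is an interval (continuity of $\det$ on the connected set $\imace{A}$), it equals $[\min(\mna{D}),\max(\mna{D})]$ with $\mna{D}=\{\det(\umace{A}),\det(\omace{A})\}$, which is the claim. Writing it uniformly: the monotonicity argument shows $\det(\imace{A})$ is the interval spanned by the two corner values, regardless of which corner gives the larger one.

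One technical point worth spelling out is why the sign of $\det(A)$ is \emph{the same} at $\umace{A}$ and $\omace{A}$ and indeed on all of $\imace{A}$: this is immediate from the intermediate value theorem once we know $\det$ never vanishes on the (path-)connected set $\imace{A}$, and nonvanishing follows from inverse nonnegativity (an inverse nonnegative matrix is by definition invertible). I would also note the monotonicity conclusion can be derived without calculus by the multilinearity/cofactor expansion of the determinant together with the explicit entries of $A^{-1}$ via the adjugate, but the gradient formula $\nabla_A\det(A)=\det(A)A^{-T}$ already used in Theorem~\ref{thmMmatDet} is the cleanest route.

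I do not expect a serious obstacle here; the only thing to be careful about is not to claim that $\mna{D}$ is an ordered pair, hence the statement is phrased with $\min$ and $\max$. The single genuine idea beyond Theorem~\ref{thmMmatDet} is handling the negative-determinant case, and that is just a sign flip in the monotonicity direction, so the proof is short.
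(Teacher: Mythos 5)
Your proof is correct and follows essentially the same route as the paper: the paper's own argument is exactly that the gradient $\det(A)A^{-T}$ has constant entrywise sign because $\det$ has constant sign on $\imace{A}$ and $A^{-T}\geq0$, so the extremes are attained at $\umace{A}$ or $\omace{A}$. You merely spell out the two sign cases and the connectedness justification in more detail than the paper does.
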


\begin{proof}
Analogously to the proof of Theorem~\ref{thmMmatDet} we use that the derivative of the determinant $\det(A)$ is $\det(A)A^{-T}$. The determinant must have a constant sign, and $A^{-T}\geq0$, so the minimal and maximal determinants are attained for $\umace{A}$ or $\omace{A}$.
\end{proof}

The above theorem can simply be extended to sign stable matrices, which are those interval matrices $\imace{A}\in\IR^{n\times n}$ satisfying $|A^{-1}|>0$; see Rohn and Farhadsefat \cite{RohFar2011}. The signs of the entries say if the determinant is nonincreasing or nondecreasing. Therefore, the left/right endpoint of $\det(\imace{A})$ is attained for a matrix $A\in\imace{A}$ defined as $a_{ij}=\unum{a}_{ij}$ if $(A^{-1})_{ij}\geq0$ and $a_{ij}=\onum{a}_{ij}$ otherwise. 

For the regularity radius, we have:

\begin{theorem}\label{thmInvNonnegRr}
If $\imace{A}$ is inverse nonnegative, then $\rr(\imace{A})=[\rr(\umace{A}),\rr(\omace{A})]$.
\end{theorem}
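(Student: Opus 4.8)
The plan is to interpret the regularity radius of an inverse nonnegative interval matrix $\imace{A}$ as the minimum over all realizations $A\in\imace{A}$ of the scalar quantity $\rr(A)=1/\|A^{-1}\|_{\infty,1}$, and to show that this minimum (and the corresponding maximum, giving the right endpoint of the interval $\rr(\imace{A})$) is attained at one of the two corner matrices $\umace{A}$ or $\omace{A}$. Here I use that $\rr$ is a continuous function on the set of nonsingular matrices, that every $A\in\imace{A}$ is nonsingular (inverse nonnegativity forces $A^{-1}\geq0$, in particular $A$ invertible for all realizations by the characterization theorem above), and hence $\rr(\imace{A})$ really is a compact interval $[\min_{A\in\imace{A}}\rr(A),\max_{A\in\imace{A}}\rr(A)]$.

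The key monotonicity fact I would exploit is Theorem~\ref{thmInvNonnegAinvBounds}: for every $A\in\imace{A}$ we have $0\leq\omace{A}^{-1}\leq A^{-1}\leq\umace{A}^{-1}$ entrywise. Since the norm $\|M\|_{\infty,1}=\max_{\|x\|_\infty=1}\|Mx\|_1$ is, for \emph{nonnegative} matrices $M$, attained at $x=e$ (the all-ones vector) — indeed $\|Mx\|_1=\sum_i|(Mx)_i|\leq\sum_i\sum_j|m_{ij}||x_j|\leq\sum_{i,j}|m_{ij}|=\|Me\|_1$ when $\|x\|_\infty=1$, with equality at $x=e$ when $M\geq0$ — we get $\|M\|_{\infty,1}=e^TMe$, the sum of all entries, whenever $M\geq0$. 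Therefore $\|A^{-1}\|_{\infty,1}=e^T A^{-1}e$ is an entrywise-monotone (nondecreasing) function of the nonnegative matrix $A^{-1}$, and combining with the sandwich $\omace{A}^{-1}\leq A^{-1}\leq\umace{A}^{-1}$ yields
$$
\|\omace{A}^{-1}\|_{\infty,1}\ \leq\ \|A^{-1}\|_{\infty,1}\ \leq\ \|\umace{A}^{-1}\|_{\infty,1}
\qquad\text{for all }A\in\imace{A}.
$$
Passing to reciprocals reverses the inequalities and gives $\rr(\umace{A})\leq\rr(A)\leq\rr(\omace{A})$ for every $A\in\imace{A}$, so the minimum of $\rr$ over $\imace{A}$ equals $\rr(\umace{A})$ and the maximum equals $\rr(\omace{A})$. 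By continuity of $\rr$ and connectedness of $\imace{A}$ every intermediate value is also attained, hence $\rr(\imace{A})=[\rr(\umace{A}),\rr(\omace{A})]$, which is the claim.

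The only mildly delicate point — and the step I would treat most carefully — is the identity $\|M\|_{\infty,1}=e^TMe$ for $M\geq0$; one must make sure the maximum over the (closed, bounded) box $\|x\|_\infty\le1$ is genuinely taken at a vertex and that $x=e$ works, which is the short computation indicated above (linearity of $x\mapsto Mx$ in each coordinate of $x$ confines the optimum to a vertex $x\in\{\pm1\}^n$, and nonnegativity of $M$ picks out $x=e$). Everything else is the monotonicity sandwich from Theorem~\ref{thmInvNonnegAinvBounds} together with the elementary fact that $t\mapsto1/t$ is decreasing on $(0,\infty)$; note that the norms here are strictly positive because the $A^{-1}$ are nonsingular, so no division-by-zero issue arises.
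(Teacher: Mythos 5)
Your proof is correct and follows essentially the same route as the paper: both rest on the sandwich $0\le\omace{A}^{-1}\le A^{-1}\le\umace{A}^{-1}$ from Theorem~\ref{thmInvNonnegAinvBounds}, the monotonicity of $\|\cdot\|_{\infty,1}$ on nonnegative matrices, and the formula $\rr(A)=1/\|A^{-1}\|_{\infty,1}$. The only difference is that you spell out the monotonicity via the identity $\|M\|_{\infty,1}=e^TMe$ for $M\ge0$, which the paper leaves implicit.
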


\begin{proof}
Let $A\in\imace{A}$. By Theorem~\ref{thmInvNonnegAinvBounds}, $\rr(A)=1/\|A^{-1}\|_{\infty,1}\leq1/\|\omace{A}^{-1}\|_{\infty,1}=\rr(\omace{A})$, and similarly from below.
\end{proof}

\section{Totally positive matrices}

A matrix $A\in\R^{n\times n}$ is totally positive if the determinants of all submatrices are positive. Despite the definition, checking this property is a polynomial problem; see Fallat and Johnson \cite{FalJoh2011}.

Let $\imace{A}\in\IR^{n\times n}$. First we show a correspondence between total positivity of $\imace{A}$ and inverse nonnegativity. Denote $s\coloneqq(1,-1,1,-1,\dots)^T$ of a convenient length.

\begin{theorem}\label{thmTpInvNonneg}
If $\imace{A}$ is totally positive, then $\diag(s)\imace{A}\diag(s)$ is inverse nonnegative.
\end{theorem}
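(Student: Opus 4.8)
The plan is to reduce the statement to the characterization of interval inverse nonnegativity established earlier, namely that an interval matrix is inverse nonnegative if and only if both endpoint matrices have nonnegative inverses. Writing $S=\diag(s)$, note that $S=S^{-1}$ and $S\geq 0$ is false, but the key algebraic fact is that for any $A$, $(SAS)^{-1}=SA^{-1}S$, so the sign pattern of $(SAS)^{-1}$ is obtained from that of $A^{-1}$ by multiplying entry $(i,j)$ by $s_is_j$. Hence $SAS$ is inverse nonnegative exactly when $s_is_j(A^{-1})_{ij}\geq 0$ for all $i,j$, i.e.\ when $A^{-1}$ has the checkerboard sign pattern.

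First I would record the relevant classical fact about totally positive matrices: if $A$ is totally positive (all minors positive), then $A$ is nonsingular and $A^{-1}$ has the checkerboard sign pattern $\sgn((A^{-1})_{ij})=(-1)^{i+j}$; this follows from the cofactor formula $A^{-1}=\frac{1}{\det A}\adj(A)$ together with the fact that every cofactor is a minor of $A$ (up to the sign $(-1)^{i+j}$) and is therefore positive, and $\det A>0$. Consequently $SA^{-1}S\geq 0$, so $SAS$ is inverse nonnegative.

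Next I would observe that this reasoning applies to \emph{every} $A\in\imace{A}$, since by assumption every $A\in\imace{A}$ is totally positive. Therefore $SAS$ is inverse nonnegative for every $A\in\imace{A}$. It remains to note that the map $A\mapsto SAS$ is a bijection between the box $\imace{A}=[\umace{A},\omace{A}]$ and the box with endpoints $S\umace{A}S$ and $S\omace{A}S$ — more precisely, $\diag(s)\imace{A}\diag(s)$ is again an interval matrix (its entries are $s_is_j\inum{a}_{ij}$, each an interval, possibly with swapped endpoints), and $\{SAS\mmid A\in\imace{A}\}$ is exactly this interval matrix. Since every realization in it is inverse nonnegative, $\diag(s)\imace{A}\diag(s)$ is inverse nonnegative by definition, which is the claim.

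I do not expect any genuine obstacle here; the only point requiring a little care is bookkeeping of signs — confirming that conjugation by $\diag(s)$ turns the checkerboard sign pattern of $A^{-1}$ into the all-nonnegative pattern of $(SAS)^{-1}$, and that $\diag(s)\imace{A}\diag(s)$ is still a well-formed interval matrix (one may need to reorder each entry's endpoints, but that does not affect the set of realizations). If one wanted to invoke the endpoint characterization of inverse nonnegativity directly rather than arguing realization-by-realization, one would instead check that $(S\umace{A}S)^{-1}\geq 0$ and $(S\omace{A}S)^{-1}\geq 0$, using that $\umace{A}$ and $\omace{A}$ are themselves totally positive; both routes are equally short.
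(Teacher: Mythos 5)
Your proposal is correct and follows essentially the same route as the paper's own proof: both use the adjugate/cofactor formula $A^{-1}=\det(A)^{-1}\adj(A)$ together with positivity of all minors to conclude that $A^{-1}$ has the checkerboard sign pattern for every $A\in\imace{A}$, whence $\diag(s)A\diag(s)$ is inverse nonnegative. Your version merely spells out the conjugation identity $(\diag(s)A\diag(s))^{-1}=\diag(s)A^{-1}\diag(s)$ and the bookkeeping about $\diag(s)\imace{A}\diag(s)$ being a well-formed interval matrix, which the paper leaves implicit.
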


\begin{proof}
The inverse of $A$ can be expressed as $A^{-1}=\det(A)^{-1}\adj(A)$, where
the entries of the adjugate matrix are defined as $\adj(A)_{ij}=(-1)^{i+j}\det(A^{ji})$, and $A^{ji}$ arises from $A$ by removing the $j$th row and the $i$th column. Thus $\imace{A}$ is inverse sign stable corresponding to the checkerboard order and therefore $\diag(s)\imace{A}\diag(s)$ is inverse nonnegative.
\end{proof}

From the above theorem, we can easily derive many useful properties of totally positive interval matrices based on the results presented in Section~\ref{sInvNonneg}.

Also total positivity of an interval matrix $\imace{A}\in\IR^{n\times n}$ can also be verified in polynomial time just by reducing the problem to two vertex matrices defined by the checkerboard order. Define $\downarrow A,\uparrow A\in\imace{A}$ as follows
\begin{align*}
\downarrow A\coloneqq\Mid{A}-\diag(s)\Rad{A}\diag(s),\quad
\uparrow A\coloneqq\Mid{A}+\diag(s)\Rad{A}\diag(s).
\end{align*}
In relation to Theorem~\ref{thmTpInvNonneg}, these matrices can also be expressed as
\begin{align*}
\downarrow A
&=\diag(s)(\umace{\diag(s)\imace{A}\diag(s)})\diag(s),\\
\uparrow A
&=\diag(s)(\omace{\diag(s)\imace{A}\diag(s)})\diag(s).
\end{align*}
Then we have all ingredients to state the result by Garloff \cite{Gar1982}:

\begin{theorem}
$\imace{A}$ is totally positive if and only if $\downarrow A$ and $\uparrow  A$ are totally positive.
\end{theorem}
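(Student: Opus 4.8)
The plan is to leverage Theorem~\ref{thmTpInvNonneg} together with the characterization of interval inverse nonnegativity by the two corner matrices, and then to transport everything back through the similarity transformation $B\mapsto\diag(s)B\diag(s)$. The key observation is that the map $A\mapsto\diag(s)A\diag(s)$ is a bijection of $\IR^{n\times n}$ onto itself (it merely flips the signs of entries $a_{ij}$ with $i+j$ odd), it preserves all minors up to a sign that depends only on the index set chosen, and — crucially — it sends $\umace{(\diag(s)\imace{A}\diag(s))}$ and $\omace{(\diag(s)\imace{A}\diag(s))}$ precisely to $\downarrow A$ and $\uparrow A$, as already recorded in the displayed identities preceding the statement.

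First I would record the elementary fact that a real matrix $A$ is totally positive if and only if $\diag(s)A\diag(s)$ is inverse nonnegative \emph{and} $\diag(s)A\diag(s)$ has the sign pattern of the inverse of a totally positive matrix realized — more carefully, one needs the converse direction of the reasoning in Theorem~\ref{thmTpInvNonneg}. Actually a cleaner route avoids needing a full converse: total positivity of an interval matrix $\imace{A}$ is equivalent to total positivity of every $A\in\imace{A}$, and since the totally positive matrices form a set whose boundary (within the relevant orthant-like region cut out by fixing signs of all minors) is governed by the smallest minors turning zero, monotonicity of each minor in each entry is what drives the vertex reduction. So the second step is: show that for a fixed minor $\det(A[I,J])$ with $|I|=|J|=k$, expanding along any row shows its partial derivative with respect to $a_{ij}$ ($i\in I$, $j\in J$) equals $\pm$ a $(k-1)\times(k-1)$ minor, and on the totally positive cone this derivative has the fixed sign $(-1)^{(\text{position of }i\text{ in }I)+(\text{position of }j\text{ in }J)}$; in the $\diag(s)$-conjugated coordinates this sign becomes uniformly one orientation, so each minor is monotone in each conjugated entry with a consistent direction.

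The third step assembles the argument. One direction is immediate: if $\imace{A}$ is totally positive then every $A\in\imace{A}$ is, and in particular $\downarrow A$ and $\uparrow A$ are totally positive since both lie in $\imace{A}$ (each conjugated entry is pushed to one of its two endpoints). For the converse, suppose $\downarrow A$ and $\uparrow A$ are totally positive. By Theorem~\ref{thmTpInvNonneg} applied to the \emph{point} matrices — or directly — $\diag(s)(\downarrow A)\diag(s)=\umace{(\diag(s)\imace{A}\diag(s))}$ and $\diag(s)(\uparrow A)\diag(s)=\omace{(\diag(s)\imace{A}\diag(s))}$ are inverse nonnegative, hence by the Kuttler characterization (the theorem just before Theorem~\ref{thmInvNonnegAinvBounds}) the whole interval matrix $\diag(s)\imace{A}\diag(s)$ is inverse nonnegative; conjugating back, every $A\in\imace{A}$ has $\diag(s)A\diag(s)$ inverse nonnegative. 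It remains to upgrade "inverse nonnegative and the two checkerboard corners totally positive" to "totally positive" for every $A\in\imace{A}$: here invoke the monotonicity of Step~2 — each minor of $A$, read in conjugated coordinates, is monotone, so its value lies between its values at the two corner matrices $\downarrow A,\uparrow A$, both of which are strictly positive after the sign correction; hence all minors of $A$ are positive, i.e.\ $A$ is totally positive.

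The main obstacle I anticipate is Step~2, the monotonicity/sign-consistency claim for \emph{all} minors simultaneously: one must verify that the checkerboard conjugation $\diag(s)\cdot\diag(s)$ really does align the directions of monotonicity of every minor (of every size, on every index set) into a single coherent ordering, so that the componentwise interval $[\downarrow A,\uparrow A]$ in conjugated coordinates sandwiches all minors at once. This is exactly the content hidden in Garloff's theorem, and a careful bookkeeping of the signs $(-1)^{i+j}$ versus the positions within $I$ and $J$ is where the real work lies; the interval-matrix wrapper around it is then routine given the results of Section~\ref{sInvNonneg}.
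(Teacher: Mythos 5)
The paper does not prove this theorem at all --- it is stated as a known result and attributed to Garloff \cite{Gar1982} --- so there is no internal proof to compare against; your proposal has to stand on its own, and unfortunately its central step does not.

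The easy direction (total positivity of $\imace{A}$ implies total positivity of $\downarrow A,\uparrow A$, since both are elements of $\imace{A}$) is fine. The gap is exactly where you anticipated it: the claim that after checkerboard conjugation every minor becomes monotone in every entry \emph{in a single coherent direction}, so that all minors are simultaneously sandwiched between their values at $\downarrow A$ and $\uparrow A$. This is false. The derivative of $\det(A[I,J])$ with respect to $a_{i_p j_q}$ carries the sign $(-1)^{p+q}$ determined by the \emph{positions} $p,q$ of the indices inside $I,J$, whereas the conjugation $A\mapsto\diag(s)A\diag(s)$ rescales $a_{ij}$ by $(-1)^{i+j}$; these agree only when $i_p-p$ and $j_q-q$ are even. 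Concretely, for $n=3$ the full determinant is increasing in $a_{13}$ (cofactor sign $(-1)^{1+3}=+1$) while the minor $\det(A[\{1,3\},\{1,3\}])$ is decreasing in $a_{13}$ (position sign $(-1)^{1+2}=-1$), and $s_1s_3=+1$ does not reconcile them: no single vertex of the box minimizes both minors. So the ``upgrade from inverse nonnegative to totally positive'' step collapses. (The inverse-nonnegativity detour through Kuttler's theorem is sound as far as it goes, but it only controls the determinant and the $(n-1)\times(n-1)$ cofactors, not the full family of minors.) A genuine repair would have to restrict attention to minors on \emph{contiguous} index sets --- there $i_p-p$ and $j_q-q$ are constant, so each such minor is extremized at one of the two corners $\downarrow A,\uparrow A$ --- and then invoke Fekete's criterion that positivity of all contiguous minors implies total positivity; even then one must break the circularity that the derivative's sign presupposes positivity of complementary (generally non-contiguous) minors. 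This is the real content of Garloff's proof, and it is not supplied by your argument.
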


As consequences, we obtain the following properties.

\begin{corollary}
If $\imace{A}$ is totally positive, then $\sigma_{\min}(\imace{A})=[\sigma_{\min}(\downarrow A),\sigma_{\min}(\uparrow A)]$ and $\sigma_{\max}(\imace{A})=[\sigma_{\max}(\umace{A}),\sigma_{\max}(\omace{A})]$.
\end{corollary}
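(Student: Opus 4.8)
The plan is to derive everything from Theorem~\ref{thmTpInvNonneg} together with the singular value results already available for inverse nonnegative matrices. First I would observe that for any $A\in\R^{n\times n}$, multiplying on the left and right by the signature matrix $\diag(s)$ is an orthogonal transformation; hence $A$ and $\diag(s)A\diag(s)$ have exactly the same singular values, and in particular the same $\sigma_{\min}$ and $\sigma_{\max}$. The same is true realization-wise, so $\sigma_{\min}(\imace{A})=\sigma_{\min}(\diag(s)\imace{A}\diag(s))$ and likewise for $\sigma_{\max}$, as ranges of continuous functions over the interval matrix.

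Next I would apply Theorem~\ref{thmTpInvNonneg}: since $\imace{A}$ is totally positive, $\imace{B}\coloneqq\diag(s)\imace{A}\diag(s)$ is inverse nonnegative. Theorem~\ref{thmInvNonnegSvdMin} then gives $\sigma_{\min}(\imace{B})=[\sigma_{\min}(\umace{B}),\sigma_{\min}(\omace{B})]$. It remains to identify $\umace{B}$ and $\omace{B}$. Because $\diag(s)\cdot\diag(s)=I_n$ acts entrywise as multiplication by $\pm1$ with the sign pattern of the checkerboard order, we have $\umace{B}=\diag(s)(\downarrow A)\diag(s)$ and $\omace{B}=\diag(s)(\uparrow A)\diag(s)$ — this is exactly the second pair of displayed identities preceding the theorem. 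Using the orthogonal invariance of singular values once more, $\sigma_{\min}(\umace{B})=\sigma_{\min}(\downarrow A)$ and $\sigma_{\min}(\omace{B})=\sigma_{\min}(\uparrow A)$, which yields the first claimed formula.

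For $\sigma_{\max}$ I would argue differently, since the $\sigma_{\max}$ analogue of Theorem~\ref{thmInvNonnegSvdMin} is not available. Instead I would use monotonicity: a totally positive matrix is in particular entrywise positive, so every $A\in\imace{A}$ satisfies $0\le\umace{A}\le A\le\omace{A}$. For nonnegative matrices $\sigma_{\max}(A)=\rho(A^TA)^{1/2}$ and, by Perron--Frobenius monotonicity of the spectral radius under entrywise domination of nonnegative matrices, $A\mapsto\sigma_{\max}(A)$ is nondecreasing in each entry on the nonnegative orthant; hence the extremes over $\imace{A}$ are attained at $\umace{A}$ and $\omace{A}$, giving $\sigma_{\max}(\imace{A})=[\sigma_{\max}(\umace{A}),\sigma_{\max}(\omace{A})]$.

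The main obstacle is the $\sigma_{\max}$ part: one has to be careful that the two endpoint matrices $\downarrow A,\uparrow A$ relevant for $\sigma_{\min}$ are \emph{not} the ones relevant for $\sigma_{\max}$, and to justify cleanly the monotonicity of $\sigma_{\max}$ on nonnegative matrices (equivalently, that $\|A\|_2$ is monotone on the nonnegative orthant, which follows from $\|A\|_2=\||A|\|_2$ and monotonicity of the spectral radius of nonnegative matrices). Everything else is a direct bookkeeping of the signature conjugation and an appeal to Theorems~\ref{thmTpInvNonneg} and~\ref{thmInvNonnegSvdMin}.
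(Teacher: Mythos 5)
Your proposal is correct and follows essentially the same route as the paper: the $\sigma_{\min}$ formula is obtained by combining Theorem~\ref{thmTpInvNonneg} with Theorem~\ref{thmInvNonnegSvdMin} via the signature conjugation (whose orthogonality preserves singular values), and the $\sigma_{\max}$ formula is exactly the content of Theorem~\ref{thmNonnegSvd} for nonnegative interval matrices, which the paper invokes and which you prove inline via monotonicity of $\rho(A^TA)$. The only difference is that you supply explicitly the bookkeeping ($\umace{\diag(s)\imace{A}\diag(s)}=\diag(s)(\downarrow A)\diag(s)$, etc.) and the Perron--Frobenius monotonicity argument that the paper leaves implicit.
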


\begin{proof}
The formula for $\sigma_{\min}(\imace{A})$ follows from Theorems~\ref{thmInvNonnegSvdMin} and~\ref{thmTpInvNonneg}.
The formula for $\sigma_{\max}(\imace{A})$ will be shown in Theorem~\ref{thmNonnegSvd} under weaker assumptions; notice that $\imace{A}$ here is componentwisely nonnegative.
\end{proof}

\begin{corollary}
If $\imace{A}$ is totally positive, then $\det(\imace{A})=[\min(\mna{D}),\max(\mna{D})]$, where $\mna{D}=\{\det(\downarrow  A),\det(\uparrow A)\}$.
\end{corollary}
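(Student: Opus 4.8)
The plan is to combine the two structural facts already available: Theorem~\ref{thmInvNonnegDet} on the determinant range of an inverse nonnegative interval matrix, and Theorem~\ref{thmTpInvNonneg} together with Garloff's characterization, which tell us how total positivity transforms under conjugation by $\diag(s)$. Specifically, I would first note that the determinant is invariant under this conjugation: for any $A\in\R^{n\times n}$ we have $\det(\diag(s)A\diag(s))=\det(s)^2\det(A)=\det(A)$, since $\det(\diag(s))=\pm1$. Hence it suffices to understand the determinant range of $\imace{B}\coloneqq\diag(s)\imace{A}\diag(s)$.

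Next, by Theorem~\ref{thmTpInvNonneg}, since $\imace{A}$ is totally positive, $\imace{B}$ is inverse nonnegative. Therefore Theorem~\ref{thmInvNonnegDet} applies directly to $\imace{B}$ and yields $\det(\imace{B})=[\min(\mna{D}'),\max(\mna{D}')]$, where $\mna{D}'=\{\det(\umace{B}),\det(\omace{B})\}$. Now I would use the identities recorded just before Garloff's theorem: $\downarrow A=\diag(s)\umace{B}\diag(s)$ and $\uparrow A=\diag(s)\omace{B}\diag(s)$. Combining these with the conjugation-invariance of the determinant gives $\det(\umace{B})=\det(\downarrow A)$ and $\det(\omace{B})=\det(\uparrow A)$, so $\mna{D}'=\mna{D}=\{\det(\downarrow A),\det(\uparrow A)\}$. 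Since the determinant is a continuous function, $\det(\imace{A})$ is an interval, and because $A\mapsto\diag(s)A\diag(s)$ is a bijection from $\imace{A}$ onto $\imace{B}$ preserving the determinant value, $\det(\imace{A})=\det(\imace{B})=[\min(\mna{D}),\max(\mna{D})]$, which is the claim.

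There is essentially no serious obstacle here: the statement is a straightforward corollary of the two cited results once one observes the determinant is unchanged by conjugating with the signature matrix $\diag(s)$. The only point requiring a moment's care is bookkeeping the correspondence $\umace{B}\leftrightarrow\downarrow A$ and $\omace{B}\leftrightarrow\uparrow A$ correctly (rather than swapped), but this is exactly the content of the displayed identities preceding Garloff's theorem, so it can simply be invoked. I would therefore keep the proof to two or three sentences, mirroring the brevity of the analogous Theorem~\ref{thmInvNonnegDet} and the preceding corollary on singular values.

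\begin{proof}
Since $\det(\diag(s))=\pm1$, the determinant is invariant under the congruence $A\mapsto\diag(s)A\diag(s)$. By Theorem~\ref{thmTpInvNonneg}, the interval matrix $\imace{B}\coloneqq\diag(s)\imace{A}\diag(s)$ is inverse nonnegative, so Theorem~\ref{thmInvNonnegDet} gives $\det(\imace{B})=[\min\{\det(\umace{B}),\det(\omace{B})\},\max\{\det(\umace{B}),\det(\omace{B})\}]$. Using the identities $\downarrow A=\diag(s)\umace{B}\diag(s)$ and $\uparrow A=\diag(s)\omace{B}\diag(s)$ together with congruence-invariance, we get $\det(\umace{B})=\det(\downarrow A)$ and $\det(\omace{B})=\det(\uparrow A)$. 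Finally, $A\mapsto\diag(s)A\diag(s)$ is a bijection of $\imace{A}$ onto $\imace{B}$ preserving determinant values, whence $\det(\imace{A})=\det(\imace{B})=[\min(\mna{D}),\max(\mna{D})]$ with $\mna{D}=\{\det(\downarrow A),\det(\uparrow A)\}$.
\end{proof}
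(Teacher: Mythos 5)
Your proof is correct and takes essentially the same route as the paper, which simply cites Theorems~\ref{thmInvNonnegDet} and~\ref{thmTpInvNonneg}; you have merely spelled out the (correct) bookkeeping of the conjugation by $\diag(s)$ that the paper leaves implicit.
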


\begin{proof}
It follows from Theorems~\ref{thmInvNonnegDet} and~\ref{thmTpInvNonneg}.
\end{proof}

\begin{corollary}
If $\imace{A}$ is totally positive, then $\rr(\imace{A})=[\rr(\downarrow  A),\rr(\uparrow A)]$.
\end{corollary}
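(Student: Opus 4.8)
The plan is to mirror the two preceding corollaries: reduce the statement about the regularity radius of a totally positive interval matrix to the inverse nonnegative case already handled by Theorem~\ref{thmInvNonnegRr}, using the correspondence from Theorem~\ref{thmTpInvNonneg}. First I would recall that the regularity radius is defined purely through the Chebyshev (componentwise maximum) norm on the perturbations, and that conjugating by $\diag(s)$ is an isometry for this norm: if $B$ differs from $A$ entrywise by at most $\delta$, then $\diag(s)B\diag(s)$ differs from $\diag(s)A\diag(s)$ entrywise by at most $\delta$, since the $(i,j)$ entry is merely multiplied by $s_is_j\in\{-1,+1\}$. Moreover $B$ is singular if and only if $\diag(s)B\diag(s)$ is singular. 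Hence $\rr(A)=\rr(\diag(s)A\diag(s))$ for every $A\in\R^{n\times n}$, and the same identity holds for the interval regularity radius $\rr(\imace{A})=\rr(\diag(s)\imace{A}\diag(s))$ because the map $A\mapsto\diag(s)A\diag(s)$ is a bijection between $\imace{A}$ and $\diag(s)\imace{A}\diag(s)$.

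Next I would invoke Theorem~\ref{thmTpInvNonneg}: since $\imace{A}$ is totally positive, the interval matrix $\imace{B}\coloneqq\diag(s)\imace{A}\diag(s)$ is inverse nonnegative, so Theorem~\ref{thmInvNonnegRr} applies and gives $\rr(\imace{B})=[\rr(\umace{B}),\rr(\omace{B})]$. Combining this with the isometry observation, $\rr(\imace{A})=\rr(\imace{B})=[\rr(\umace{B}),\rr(\omace{B})]$. It remains to identify the endpoints: by the alternative expressions for $\downarrow A$ and $\uparrow A$ recorded just before Garloff's theorem, namely $\downarrow A=\diag(s)\umace{B}\diag(s)$ and $\uparrow A=\diag(s)\omace{B}\diag(s)$, we again use $\rr$-invariance under $\diag(s)$-conjugation to get $\rr(\umace{B})=\rr(\downarrow A)$ and $\rr(\omace{B})=\rr(\uparrow A)$. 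This yields $\rr(\imace{A})=[\rr(\downarrow A),\rr(\uparrow A)]$, as claimed.

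The only genuine point requiring care — and the one I would state explicitly rather than leave to the reader — is the invariance $\rr(M)=\rr(\diag(s)M\diag(s))$ under signature conjugation, both for point matrices and for interval matrices; everything else is a formal chaining of earlier results. Since $\diag(s)^{-1}=\diag(s)$ and $|\diag(s)E\diag(s)|=|E|$ entrywise for any matrix $E$, this invariance is immediate, so no real obstacle arises. For completeness one could alternatively bypass $\diag(s)$ altogether and argue directly from Theorem~\ref{thmInvNonnegAinvBounds} applied to $\imace{B}$ together with $\rr(A)=1/\|A^{-1}\|_{\infty,1}$ and the norm identity $\|\diag(s)A^{-1}\diag(s)\|_{\infty,1}=\|A^{-1}\|_{\infty,1}$, exactly paralleling the proof of Theorem~\ref{thmInvNonnegRr}; but the reduction route above is shorter and fits the pattern of the surrounding corollaries.

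\begin{proof}
As in the previous two corollaries, we reduce to the inverse nonnegative case. Put $\imace{B}\coloneqq\diag(s)\imace{A}\diag(s)$. Since $\diag(s)^{-1}=\diag(s)$, the map $A\mapsto\diag(s)A\diag(s)$ is a bijection of $\imace{A}$ onto $\imace{B}$ that preserves singularity, and it preserves the componentwise maximum norm of perturbations because it only multiplies the $(i,j)$ entry by $s_is_j\in\{-1,1\}$. Hence $\rr(\imace{A})=\rr(\imace{B})$, and likewise $\rr(\umace{B})=\rr(\diag(s)\umace{B}\diag(s))=\rr(\downarrow A)$ and $\rr(\omace{B})=\rr(\uparrow A)$, using the expressions $\downarrow A=\diag(s)\umace{B}\diag(s)$ and $\uparrow A=\diag(s)\omace{B}\diag(s)$ noted before Garloff's theorem. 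By Theorem~\ref{thmTpInvNonneg}, $\imace{B}$ is inverse nonnegative, so Theorem~\ref{thmInvNonnegRr} gives $\rr(\imace{B})=[\rr(\umace{B}),\rr(\omace{B})]$. Combining these equalities yields $\rr(\imace{A})=[\rr(\downarrow A),\rr(\uparrow A)]$.
\end{proof}
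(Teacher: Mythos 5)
Your proof is correct and takes essentially the same route as the paper, which simply cites Theorems~\ref{thmInvNonnegRr} and~\ref{thmTpInvNonneg}; you merely make explicit the (routine but genuinely needed) invariance of $\rr$ under conjugation by $\diag(s)$, which the paper leaves implicit.
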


\begin{proof}
It follows from Theorems~\ref{thmInvNonnegRr} and~\ref{thmTpInvNonneg}.
\end{proof}

Totally positive matrices have distinct positive eigenvalues $\lambda_1,>\dots>\lambda_n>0$ the properties of which enable us to compute the eigenvalue ranges of interval matrices.

\begin{theorem}
If $\imace{A}$ is totally positive, then $\lambda_n(\imace{A})=[\lambda_n(\downarrow A),\lambda_n(\uparrow A)]$ and $\lambda_1(\imace{A})=[\lambda_1(\umace{A}),\lambda_1(\omace{A})]$.
\end{theorem}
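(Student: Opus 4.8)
The plan is to reduce both claims to monotonicity of the relevant eigenvalue as a function of the individual matrix entries, exactly as was done for the determinant and the smallest singular value in Section~\ref{sInvNonneg}. For the largest eigenvalue $\lambda_1$, the key observation is that a totally positive interval matrix $\imace{A}$ is componentwise nonnegative (since every entry is itself a $1\times1$ minor, hence positive), so the matrices attaining the entrywise minimum and maximum are $\umace{A}$ and $\omace{A}$. By the Perron--Frobenius theory, for a nonnegative matrix the spectral radius is the largest eigenvalue $\lambda_1$, and $\lambda_1$ is monotone nondecreasing with respect to each entry. Hence for every $A\in\imace{A}$ we have $\lambda_1(\umace{A})\leq\lambda_1(A)\leq\lambda_1(\omace{A})$, and since $\lambda_1(\imace{A})$ is a compact interval (continuity of $\lambda_1$ and connectedness of $\imace{A}$) with these endpoints attained, the formula $\lambda_1(\imace{A})=[\lambda_1(\umace{A}),\lambda_1(\omace{A})]$ follows.

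For the smallest eigenvalue $\lambda_n$, I would pass through the inverse, following the route already used in Theorem~\ref{thmInvNonnegSymLmin}. By Theorem~\ref{thmTpInvNonneg}, the matrix $B\coloneqq\diag(s)A\diag(s)$ is inverse nonnegative for every $A\in\imace{A}$, and $\diag(s)\imace{A}\diag(s)$ has lower endpoint $\umace{\diag(s)\imace{A}\diag(s)}=\diag(s)(\downarrow A)\diag(s)$ and upper endpoint $\diag(s)(\uparrow A)\diag(s)$ by the alternative expressions for $\downarrow A,\uparrow A$ recorded just before the Garloff theorem. Now $A$ and $B=\diag(s)A\diag(s)$ are similar, so they have the same spectrum; in particular $\lambda_n(A)=\lambda_n(B)$. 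Since $B^{-1}\geq0$, Perron--Frobenius gives $\lambda_1(B^{-1})=\rho(B^{-1})$, whence $\lambda_n(B)=\lambda_n(A)=1/\rho(B^{-1})=1/\lambda_1(B^{-1})$, which is the positive eigenvalue of smallest modulus (here total positivity guarantees all eigenvalues are real and positive, so $\lambda_n>0$ and this makes sense). Because $\rho(B^{-1})$ is monotone in the entries of $B^{-1}$, and Theorem~\ref{thmInvNonnegAinvBounds} bounds $B^{-1}$ entrywise between the inverses of the two endpoint matrices of $\diag(s)\imace{A}\diag(s)$, we get $\lambda_n(\downarrow A)\leq\lambda_n(A)\leq\lambda_n(\uparrow A)$ for every $A\in\imace{A}$; the interval structure of $\lambda_n(\imace{A})$ then yields $\lambda_n(\imace{A})=[\lambda_n(\downarrow A),\lambda_n(\uparrow A)]$.

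The main obstacle is bookkeeping rather than conceptual: one must be careful that the similarity transform $\diag(s)\cdot\diag(s)$ sends the interval hull of $\imace{A}$ to the interval hull of $\diag(s)\imace{A}\diag(s)$ while flipping which endpoint of each off-pattern entry is the lower one, so that the endpoint matrices of $\diag(s)\imace{A}\diag(s)$ correspond precisely to $\downarrow A$ and $\uparrow A$ after conjugating back; this is exactly the content of the displayed identities preceding the Garloff theorem, so it can be invoked rather than reproved. A minor point to state explicitly is that total positivity forces the spectrum to be real, simple and positive (already noted in the text), which is what legitimises writing $\lambda_n=1/\lambda_1(\cdot^{-1})$ and identifying $\lambda_n$ with the Perron root of the inverse after the sign similarity.
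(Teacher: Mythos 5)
Your proposal is correct, and while your treatment of $\lambda_1$ coincides with the paper's (Perron monotonicity of the spectral radius on nonnegative matrices, with $\umace{A}$ and $\omace{A}$ as the extremal realizations), your argument for $\lambda_n$ takes a genuinely different route. The paper works directly with the eigenvalue derivative $\partial\lambda_n/\partial a_{ij}=x_iy_j$ and invokes the Fallat--Johnson fact that the left and right eigenvectors for $\lambda_n$ have strictly alternating signs throughout $\imace{A}$, so that the sign pattern $s_is_j$ of the gradient is constant over the whole interval matrix and the extrema land on $\downarrow A$ and $\uparrow A$. You instead conjugate by $\diag(s)$, use Theorem~\ref{thmTpInvNonneg} to land in the inverse nonnegative setting, identify $\lambda_n(A)=1/\rho\bigl(\diag(s)A^{-1}\diag(s)\bigr)$ (legitimate because total positivity forces a real, simple, positive spectrum, and this holds for $\downarrow A,\uparrow A\in\imace{A}$ as well), and then combine the entrywise inverse bounds of Theorem~\ref{thmInvNonnegAinvBounds} with Perron monotonicity of $\rho$. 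Your bookkeeping of the endpoints via the displayed identities before Garloff's theorem is right, and the conclusion follows. The trade-off: your argument recycles machinery already proved in Section~\ref{sInvNonneg} and avoids both the derivative formula and the global sign-constancy claim for eigenvectors, but it is intrinsically limited to the two extreme eigenvalues, since it rests on Perron theory applied to $A$ and to its (sign-conjugated) inverse; the paper's derivative argument is what generalizes to the intermediate eigenvalue sets $\lambda_i(\imace{A})$ discussed immediately after the theorem.
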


\begin{proof}
Let $A\in\imace{A}$ and let $x,y$ be the right and left eigenvectors of $A$ corresponding to the smallest eigenvalue $\lambda_n(A)$ and normalized such that $x^Ty=1$. By Fallat \& Johnson \cite{FalJoh2011}, the signs of both vectors $x$ and $y$ alternate, so we can assume that both have the sign vector given by $s$ defined above, that is, $\sgn(x)=\sgn(y)=s$. The derivative of $\lambda_n(A)$ with respect to $a_{ij}$ is $x_iy_j$, so the maximum is attained for $\Mid{a}_{ij}+s_is_j\Rad{a}_{ij}=(\uparrow A)_{ij}$ and similarly for the minimum.

The second formula follows from Perron theory of eigenvalues of nonnegative matrices. For each $A\in\imace{A}$ we have $\lambda_1(A)=\rho(A)\leq\rho(\omace{A})=\lambda_1(\omace{A})$, and similarly of the lower bound.
\end{proof}

Even more, we can easily compute eigenvalue sets $\lambda_i(\imace{A})$ for any other $i\in\seznam{n}$. By Fallat \& Johnson \cite{FalJoh2011}, the signs of both left and right eigenvectors corresponding to $\lambda_i(A)$ are constant for every $A\in\imace{A}$ (eigenvalues of principal submatrices of size $n-1$ strictly interlace eigenvalues of $A$, so no eigenvector has a zero entry). Therefore, we can proceed as follows. Let $x$ and $y$, $x^Ty=1$, be the eigenvectors corresponding to $\lambda_i(\Mid{A})$. Then $\lambda_i(\imace{A})=[\lambda_i(A^1),\lambda_i(A^2)]$, where $A^1$ and $A^2$ are defined as
\begin{align*}
A^1&=\Mid{A}-\diag(\sgn(x))\Rad{A}\diag(\sgn(y)),\\
A^2&=\Mid{A}+\diag(\sgn(x))\Rad{A}\diag(\sgn(y)).
\end{align*}

Consider now an interval system $\imace{A}x=\ivr{b}$ with $\imace{A}$ totally positive. Denote $\downarrow b\coloneqq\Mid{b}-\diag(s)\Rad{b}$ and $\uparrow b\coloneqq\Mid{b}+\diag(s)\Rad{b}$. Denote by $\geq^*$ the checkerboard order, that is, $u\geq^* v$ iff $\diag(s)u\geq\diag(s)v$. Eventually, the interval vector $[v^1,v^2]^*$ with $v^1 \leq^* v^2$ induced by the checkerboard order is defined as 
$$
[v^1,v^2]^*\coloneqq\diag(s)[\diag(s)v^1,\diag(s)v^2].
$$
Then interval hull of the solution se reads
\begin{itemize}
\item
$\ihull\Ss=[(\uparrow A)^{-1}(\downarrow b),\,(\downarrow A)^{-1}(\uparrow b)]^*$ 
 when $\downarrow b\geq^*0$,
\item
$\ihull\Ss=[(\downarrow A)^{-1}(\downarrow b),\,(\uparrow A)^{-1}(\uparrow b)]^*$ 
 when $\uparrow b\leq^*0$,
\item
$\ihull\Ss=[(\downarrow A)^{-1}(\downarrow b),\,(\downarrow A)^{-1}(\uparrow b)]^*$ 
 when $0\in\ivr{b}$.
\end{itemize}

For an extension of totally positive matrices to the so called sign regular matrices with a prescribed signature; see Garloff et al. \cite{GarAdm2016a}.

Notice that totally positive matrices are componentwisely nonnegative, so all results from Section~\ref{sNonneg} are valid for totally positive matrices, too.

\section{P-matrices}

A square real matrix is a P-matrix if all its principal minors are positive. 
The problem of checking whether a given matrix is a P-matrix is co-NP-hard \cite{Cox1994,KreLak1998}. Fortunately, there are several effectively recognizable sub-classes of P-matrices, such as positive definite matrices, totally positive matrices, (inverse) M-matrices or more generally H-matrices with positive diagonal entries.
By Bia{\l}as and Garloff \cite{BiaGar1984}, an interval matrix $\imace{A}\in\IR^{n\times n}$ is a P-matrix if and only if $\Mid{A}-\diag(z)\Rad{A}\diag(z)$ is a P-matrix for each $z\in\{\pm1\}^n$.

Positive definiteness is easily verifiable for real matrices, but for interval ones it is co-NP-hard \cite{KreLak1998,Roh1994}, so they do not constitute a polynomial sub-class of interval P-matrices. On the other hand, totally positive matrices, M-matrices or H-matrices with positive diagonal are such a sub-class, as we already observed above. The following result shows that as long as the midpoint matrix $\Mid{A}$ of an interval P-matrix is an H-matrix, then $\imace{A}$ itself must be an H-matrix.

\begin{theorem}
Let $\Mid{A}$ be an M-matrix. Then $\imace{A}$ is a P-matrix if an only if it is an H-matrix.
\end{theorem}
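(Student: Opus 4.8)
**Proof plan for the final statement ("Let $\Mid{A}$ be an M-matrix. Then $\imace{A}$ is a P-matrix if and only if it is an H-matrix.")**

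The plan is to reduce everything to Theorem~\ref{thmHmatMidReg}, which already tells us that, when $\Mid{A}$ is an M-matrix, regularity of $\imace{A}$ is equivalent to $\imace{A}$ being an H-matrix. So it suffices to show that, under the hypothesis $\Mid{A}\in\R^{n\times n}$ is an M-matrix, the property "$\imace{A}$ is a P-matrix" is itself equivalent to "$\imace{A}$ is regular". One direction is essentially free: every P-matrix is nonsingular (a matrix with all principal minors positive has in particular positive determinant), so if $\imace{A}$ is a P-matrix then every $A\in\imace{A}$ is nonsingular, i.e.\ $\imace{A}$ is regular, hence an H-matrix by Theorem~\ref{thmHmatMidReg}. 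Note this direction does not even use the M-matrix hypothesis on $\Mid{A}$.

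For the converse, suppose $\imace{A}$ is an H-matrix. Combined with the hypothesis that $\Mid{A}$ is an M-matrix — in particular $\Mid{A}$ has positive diagonal and nonpositive off-diagonal entries — I would first argue that every $A\in\imace{A}$ has positive diagonal entries: indeed $\Mig(\inum a_{ii})>0$ is part of the comparison matrix $\langle\imace A\rangle$ being an M-matrix (its diagonal must be positive), and since $\Mid{A}_{ii}>0$ this forces $\unum a_{ii}>0$, so $a_{ii}>0$ for all $A\in\imace A$. Thus $\imace A$ is an H-matrix with positive diagonal, a class the excerpt has already flagged as a sub-class of P-matrices. Concretely: for a fixed $A\in\imace A$, its comparison matrix $\langle A\rangle$ dominates $\langle\imace A\rangle$ entrywise in the M-matrix sense (larger diagonal, off-diagonals no more negative), hence $\langle A\rangle$ is an M-matrix, so $A$ is an H-matrix with positive diagonal; every such matrix is a P-matrix (it is, e.g., regular with positive-definite symmetric part after suitable scaling, or one invokes the standard fact that $\langle A\rangle$ being an M-matrix with $a_{ii}>0$ makes $A$ a P-matrix via the diagonal-dominance-type vector $v>0$ with $\langle A\rangle v>0$). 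Since this holds for every $A\in\imace A$, $\imace A$ is a P-matrix.

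Alternatively, and perhaps more cleanly within the paper's framework, I would phrase the converse through the Bia\l{}as--Garloff characterization: $\imace A$ is a P-matrix iff each vertex matrix $\Mid A-\diag(z)\Rad A\diag(z)$, $z\in\{\pm1\}^n$, is a P-matrix. Each such vertex matrix lies in $\imace A$, so by the argument above each is an H-matrix with positive diagonal, hence a P-matrix, and we are done. The main obstacle — really the only non-bookkeeping point — is the implication "$A$ is an H-matrix with positive diagonal $\Rightarrow$ $A$ is a P-matrix"; this is classical but should be cited (it follows because $\langle A\rangle$ being an M-matrix yields $v>0$ with $\langle A\rangle v>0$, and then $D=\diag(v)$ makes $AD$ strictly diagonally dominant with positive diagonal, which is a P-matrix, and $AD$ being a P-matrix is equivalent to $A$ being one). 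Everything else is a direct appeal to Theorem~\ref{thmHmatMidReg} and to the fact that P-matrices are regular.
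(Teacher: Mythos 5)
Your proposal is correct and follows essentially the same route as the paper: the \quo{only if} direction uses that P-matrices are nonsingular, hence $\imace{A}$ is regular, and then invokes Theorem~\ref{thmHmatMidReg}; the \quo{if} direction reduces to the fact that an H-matrix with positive diagonal is a P-matrix, with the positive diagonal forced by $\Mig(\inum{a}_{ii})>0$ together with $\Mid{A}_{ii}>0$. The paper dismisses that direction as \quo{obvious}, so your explicit justification (scaling by $D=\diag(v)$ to get strict diagonal dominance) merely fills in the details the paper leaves implicit.
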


\begin{proof}
\quo{If.}
It is obvious. Notice that every matrix in $\imace{A}$ must have positive diagonal.

\quo{Only if.}
Since $\Mid{A}$ is an M-matrix and $\imace{A}$ is regular, the interval matrix $\imace{A}$ must be an H-matrix in view of Theorem~\ref{thmHmatMidReg}. 
\end{proof}

In Hlad\'{\i}k \cite{Hla2017b}, it was shown that an interval matrix $\imace{P}$ with either $\Mid{A}$ or $\Rad{A}$ diagonal is a P-matrix if and only if $\umace{A}$ is a P-matrix. This reduces the problem to just one case, which is however still hard to check in general.

Let us mention one more polynomially decidable subclass of interval P-matrices.
A matrix $A\in\R^{n\times n}$ is a \emph{B-matrix} if
\begin{align*}
\sum_{j=1}^na_{ij}>0 \quad \mbox{and}\quad 
\frac{1}{n}\sum_{j=1}^na_{ij}> a_{ik}\ \forall i\not=k.
\end{align*}
Any B-matrix is a P-matrix; see Pe{\~n}a \cite{Pen2001}.
For an interval matrix $\imace{A}\in\IR^{n\times n}$, B-matrix property is easily checked by adapting the above characterization.

\begin{theorem}
$\imace{A}\in\IR^{n\times n}$ is a B-matrix if and only if
\begin{align*}
\sum_{j=1}^n \unum{a}_{ij}>0 \quad \mbox{and}\quad 
\sum_{j\not=k} \unum{a}_{ij}> (n-1)\onum{a}_{ik}\ \forall i\not=k.
\end{align*}
\end{theorem}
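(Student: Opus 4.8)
The plan is to reduce the interval statement to the pointwise definition of a B-matrix by a monotonicity argument: the defining inequalities of a B-matrix are each of the form ``some linear functional of the entries of a fixed row is positive'', so to verify that \emph{every} $A\in\imace{A}$ satisfies them it suffices to check the worst realization in each inequality separately. First I would fix a row index $i$ and examine the two families of constraints. For the row-sum condition $\sum_{j=1}^n a_{ij}>0$, the left-hand side is increasing in each $a_{ij}$, so its minimum over $A\in\imace{A}$ is attained at $a_{ij}=\unum{a}_{ij}$ for all $j$; hence this condition holds for all $A\in\imace{A}$ iff $\sum_{j=1}^n \unum{a}_{ij}>0$.

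Next I would treat the second family. Fix $i$ and $k\not=i$ and rewrite $\frac1n\sum_{j}a_{ij}>a_{ik}$ as $\sum_{j\not=k}a_{ij}>(n-1)a_{ik}$ by multiplying through by $n$ and moving the $a_{ik}$ term (noting the coefficient of $a_{ik}$ on the left is $1$, so $\frac1n\sum_j a_{ij}-a_{ik}=\frac1n(\sum_{j\not=k}a_{ij}-(n-1)a_{ik})$). The left side $\sum_{j\not=k}a_{ij}$ is increasing in each $a_{ij}$ with $j\not=k$, and the right side $(n-1)a_{ik}$ is increasing in $a_{ik}$; since the index sets $\{j\neq k\}$ and $\{k\}$ are disjoint, the quantity $\sum_{j\not=k}a_{ij}-(n-1)a_{ik}$ is minimized over $\imace{A}$ by taking $a_{ij}=\unum{a}_{ij}$ for $j\not=k$ and $a_{ik}=\onum{a}_{ik}$. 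Therefore this inequality holds for all $A\in\imace{A}$ iff $\sum_{j\not=k}\unum{a}_{ij}>(n-1)\onum{a}_{ik}$.

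Combining: $\imace{A}$ is a B-matrix (i.e.\ every $A\in\imace{A}$ is a B-matrix) iff both displayed conditions hold for every $i$, and for every $k\neq i$ in the second, which is exactly the claimed characterization. For the converse direction one simply notes that if the two endpoint conditions hold, then by the monotonicity just described every $A\in\imace{A}$ satisfies the pointwise B-matrix inequalities, hence every such $A$ is a B-matrix.

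I do not expect a genuine obstacle here; the only point requiring a little care is the algebraic rearrangement turning $\frac1n\sum_j a_{ij}>a_{ik}$ into $\sum_{j\neq k}a_{ij}>(n-1)a_{ik}$ and the observation that the coefficient on $a_{ik}$ stays positive after clearing denominators, so that the ``worst case'' is genuinely the lower endpoints on the off-$k$ entries together with the upper endpoint on the $k$th entry — the disjointness of these two index sets is what makes the separate optimizations compatible and lets us read off a single witness matrix. Everything else is a routine ``a monotone function on a box attains its extrema at a vertex'' argument applied row by row.
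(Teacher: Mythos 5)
Your proof is correct, and it is exactly the argument the paper leaves implicit (the paper states this theorem without proof, remarking only that the B-matrix property is ``easily checked by adapting the above characterization''). The key observations — that each defining inequality is linear and monotone in the row entries, and that after clearing the denominator the variable $a_{ik}$ appears only on the right-hand side so the two separate vertex optimizations are compatible — are precisely what is needed, and your algebraic rearrangement of $\frac{1}{n}\sum_j a_{ij}>a_{ik}$ into $\sum_{j\neq k}a_{ij}>(n-1)a_{ik}$ is accurate.
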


\section{Diagonally interval matrices}

We say that an interval matrix $\imace{A}\in\IR^{n\times n}$ is \emph{diagonally interval} if $\Rad{A}$ is diagonal. These matrices are still intractable from many viewpoints. As shown in Rump \cite{Rum2003}, checking P-matrix property, which is co-NP-hard, can be reduced to checking regularity of an interval matrix $\imace{A}\in\IR^{n\times n}$ with $\Rad{A}=I_n$. Therefore, checking regularity of a diagonally interval matrix is co-NP-hard as well. Similarly, there will be hard many problems related to solving interval linear equations.

On the other hard, regularity turns out to be tractable as long as $\Mid{A}$ is symmetric. Moreover, we can effectively determine all eigenvalues of $\imace{A}$. The following theorem extends the result from Hlad\'{\i}k \cite{Hla2015b}.

\begin{theorem}
Let $\imace{A}\in\IR^{n\times n}$ be diagonally interval and $\Mid{A}$ symmetric. Then $\lambda_i(\smace{A})=[\lambda_i(\umace{A}),\lambda_i(\omace{A})]$ for every $i=1,\dots,n$.
\end{theorem}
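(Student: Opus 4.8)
The plan is to exploit the Hoffman–Wielandt / Weyl monotonicity of symmetric eigenvalues together with the very restricted perturbation structure forced by $\Rad{A}$ being diagonal. Fix $i\in\{1,\dots,n\}$ and let $A\in\smace{A}$, so $A=\Mid{A}+D$ with $D$ a diagonal matrix satisfying $|D_{kk}|\le\Rad{A}_{kk}$ and all off-diagonal entries zero; note $A$ is symmetric because $\Mid{A}$ is symmetric and $D$ is diagonal. The key observation is that the admissible perturbation set $\{D:\ |D_{kk}|\le\Rad{A}_{kk}\}$ is a box whose vertices are exactly the matrices $\Mid{A}\pm\diag(z)\Rad{A}$ with $z\in\{\pm1\}^n$ hitting the diagonal; in particular $\umace{A}=\Mid{A}-\Rad{A}$ and $\omace{A}=\Mid{A}+\Rad{A}$ correspond to $z=-e$ and $z=e$ respectively. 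Since $\lambda_i$ is a continuous function on $\smace{A}$ and the set is connected, $\lambda_i(\smace{A})$ is an interval $[\underline\mu,\overline\mu]$, and it suffices to show $\overline\mu=\lambda_i(\omace{A})$ and $\underline\mu=\lambda_i(\umace{A})$.

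For the upper bound I would use Weyl's inequality in the form: for symmetric $M,N$, $\lambda_i(M+N)\le\lambda_i(M)+\lambda_{\max}(N)$, and more to the point the monotonicity principle that if $N\succeq 0$ then $\lambda_i(M+N)\ge\lambda_i(M)$ for all $i$. Writing $\omace{A}=A+(\omace{A}-A)=A+2\diag(w)$ where $w_k=\tfrac12(\Rad{A}_{kk}-D_{kk})\ge0$, the perturbation $\omace{A}-A$ is a nonnegative diagonal matrix, hence positive semidefinite, so $\lambda_i(\omace{A})\ge\lambda_i(A)$ for every $i$ and every $A\in\smace{A}$. Symmetrically, $A-\umace{A}$ is a nonnegative diagonal matrix, so $\lambda_i(A)\ge\lambda_i(\umace{A})$. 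This already gives $\lambda_i(\umace{A})\le\lambda_i(A)\le\lambda_i(\omace{A})$ for all $A\in\smace{A}$, and since both $\umace{A}$ and $\omace{A}$ lie in $\smace{A}$, the extreme values are attained there; combined with continuity and connectedness, $\lambda_i(\smace{A})=[\lambda_i(\umace{A}),\lambda_i(\omace{A})]$.

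The step to watch is the semidefiniteness claim $\omace{A}-A\succeq0$: it holds precisely because the radius matrix is \emph{diagonal}, so the difference between any realization and the upper endpoint is a diagonal matrix with nonnegative entries, which is automatically PSD — for a non-diagonal $\Rad{A}$ this would fail and the theorem would be false in general (consistent with the co-NP-hardness noted in the text for the non-symmetric diagonally interval case and with the fact that symmetry of $\Mid{A}$ is what rescues tractability here). One should also double-check that $\lambda_i$ is genuinely continuous across $\smace{A}$ including at points with repeated eigenvalues — this is standard since the ordered eigenvalues of a symmetric matrix depend continuously (indeed Lipschitz, by Hoffman–Wielandt) on the matrix — and that the interpretation of $\lambda_i(\smace{A})$ as the range over the symmetric realizations is the one in force, which the excerpt fixes in the interval-notation section. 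No other subtlety arises; the argument is essentially a one-line application of eigenvalue monotonicity once the perturbation structure is identified.
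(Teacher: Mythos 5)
Your proposal is correct and follows essentially the same route as the paper: the paper applies the Courant--Fischer min--max characterization directly, using that $x^TAx\leq x^T\omace{A}x$ for every realization (which is exactly the positive semidefiniteness of the nonnegative diagonal perturbation you isolate), while you invoke the equivalent Weyl monotonicity principle as a named lemma. Your added remarks on continuity, connectedness, and attainment of the endpoints only make explicit what the paper leaves implicit.
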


\begin{proof}
By the Courant--Fischer theorem we have for every $A\in\imace{A}$
$$
\lambda_i(A)
=\max_{S:\dim(S)=i}\ \min_{x\in S,\,\|x\|=1} x^TAx
\leq\max_{S:\dim(S)=i}\ \min_{x\in S,\,\|x\|=1} x^T\omace{A}x
=\lambda_i(\omace{A}),
$$
and similarly for the lower bound.
\end{proof}

As a simple consequence, we have:

\begin{corollary}
Let $\imace{A}\in\IR^{n\times n}$ be diagonally interval and $\Mid{A}$ symmetric. Then $
\onum{\rho}(\smace{A})=
 \max\{\lambda_1(\omace{A}),-\lambda_n(\umace{A})\}.
$
\end{corollary}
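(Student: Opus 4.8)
The plan is to reduce everything to the preceding theorem on the eigenvalue sets of $\smace{A}$. First I would recall the elementary fact that for a real symmetric matrix $A$ all eigenvalues are real and $\rho(A)$ is the largest of their absolute values, so $\rho(A)=\max\{\lambda_1(A),-\lambda_n(A)\}$. This is the only structural fact needed about the spectral radius.

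Next, take an arbitrary $A\in\smace{A}$. The preceding theorem (valid precisely for symmetric realizations, which is exactly our situation) gives $\lambda_1(A)\leq\lambda_1(\omace{A})$ and $\lambda_n(A)\geq\lambda_n(\umace{A})$, hence $-\lambda_n(A)\leq-\lambda_n(\umace{A})$. Combining these with the identity $\rho(A)=\max\{\lambda_1(A),-\lambda_n(A)\}$ yields $\rho(A)\leq\max\{\lambda_1(\omace{A}),-\lambda_n(\umace{A})\}$, which establishes the inequality $\onum{\rho}(\smace{A})\leq\max\{\lambda_1(\omace{A}),-\lambda_n(\umace{A})\}$.

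For the reverse inequality I would observe that, since $\Mid{A}$ is symmetric and $\Rad{A}$ is diagonal (hence symmetric), both endpoint matrices $\umace{A}=\Mid{A}-\Rad{A}$ and $\omace{A}=\Mid{A}+\Rad{A}$ are symmetric, so they belong to $\smace{A}$. Therefore $\onum{\rho}(\smace{A})\geq\rho(\omace{A})\geq\lambda_1(\omace{A})$ and likewise $\onum{\rho}(\smace{A})\geq\rho(\umace{A})\geq-\lambda_n(\umace{A})$, whence $\onum{\rho}(\smace{A})\geq\max\{\lambda_1(\omace{A}),-\lambda_n(\umace{A})\}$. Putting the two bounds together gives the claimed equality.

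There is essentially no real obstacle here; the proof is a short corollary. The only point worth a word of care is that the two terms of the maximum may be realized by two \emph{different} matrices in $\smace{A}$ (one producing the large positive eigenvalue, the other the large-magnitude negative one), but since we only claim the value of the upper endpoint $\onum{\rho}$ and not that $\rho(\smace{A})$ is an interval with this number at its top, this causes no difficulty. One should also be careful to apply the preceding theorem only to $A\in\smace{A}$, i.e.\ to symmetric realizations, which matches its hypothesis exactly.
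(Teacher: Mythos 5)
Your proof is correct and takes essentially the same route the paper intends: the corollary is stated there without proof as a ``simple consequence'' of the preceding theorem, and your argument --- combining $\rho(A)=\max\{\lambda_1(A),-\lambda_n(A)\}$ for symmetric $A$ with the bounds $\lambda_1(A)\le\lambda_1(\omace{A})$ and $\lambda_n(A)\ge\lambda_n(\umace{A})$, and noting that $\umace{A},\omace{A}\in\smace{A}$ realize the two terms of the maximum --- is exactly that derivation. Your closing remark that the two terms may be attained by different matrices, which is harmless since only the upper endpoint $\onum{\rho}(\smace{A})$ is claimed, is an accurate and worthwhile clarification.
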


Since the upper bounds for the eigenvalues intervals are attained for the same matrix $\omace{A}$ and analogously for the lower bounds, we get as a consequence simple formula for the range of the determinant provided $\Mid{A}$ is positive semidefinite. This is not the case for a general diagonally interval matrix.

\begin{corollary}
Let $\imace{A}\in\IR^{n\times n}$ be diagonally interval and $\Mid{A}$ symmetric positive semidefinite. Then $\det(\imace{A})=[\det(\umace{A}),\det(\omace{A})]$.
\end{corollary}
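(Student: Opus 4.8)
The plan is to combine the eigenvalue result just proved with the standard fact that the determinant of a symmetric matrix is the product of its eigenvalues, so I first observe that for every $A\in\smace{A}$ we have $\det(A)=\prod_{i=1}^n\lambda_i(A)$, and by the preceding theorem each factor satisfies $\lambda_i(\umace{A})\leq\lambda_i(A)\leq\lambda_i(\omace{A})$. The extra hypothesis that $\Mid{A}$ is positive semidefinite is what makes this multiplicative bound behave monotonically: first I would note that since $\Rad{A}$ is diagonal, every $A\in\smace{A}$ differs from $\Mid{A}$ only in its diagonal, and $\umace{A}\preceq A\preceq\omace{A}$ in the Loewner order; hence, because $\Mid{A}\succeq0$ and $\umace{A}=\Mid{A}-\Rad{A}$ might have a negative eigenvalue while $\omace{A}=\Mid{A}+\Rad{A}\succeq0$, I need to rule out sign changes of the determinant, or rather handle them.

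Here is the cleaner route I would actually take. Since $\omace{A}\succeq\Mid{A}\succeq0$, all eigenvalues $\lambda_i(\omace{A})\geq0$, so $\det(\omace{A})\geq0$ and it is the largest value any $\prod\lambda_i(A)$ could reach among matrices with all eigenvalues bounded above by the $\lambda_i(\omace{A})$ — but this needs nonnegativity of the lower eigenvalue bounds too, which may fail. The correct observation is that $\det(A)=\prod_{i=1}^n\lambda_i(A)$ and we want to show $\min_{A\in\smace{A}}\det(A)=\det(\umace{A})$ and $\max_{A\in\smace{A}}\det(A)=\det(\omace{A})$. For the maximum: every $\lambda_i(A)\in[\lambda_i(\umace{A}),\lambda_i(\omace{A})]$ with $\lambda_i(\omace{A})\geq0$; if some $\lambda_i(A)<0$ then at most finitely many indices can be negative, and I claim the product is still dominated by $\prod\lambda_i(\omace{A})$ — this is the one spot requiring care. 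In fact, a sign-counting argument works: the eigenvalues of $A$ for $A\in\smace{A}$ can be negative only in the lowest positions $i=k+1,\dots,n$ where $\lambda_i(\umace{A})<0$; since the map $A\mapsto A$ diagonal-perturbed keeps $\lambda_1,\dots,\lambda_k$ nonnegative, writing $\det(A)=\bigl(\prod_{i\le k}\lambda_i(A)\bigr)\bigl(\prod_{i>k}\lambda_i(A)\bigr)$, the first factor is at most $\prod_{i\le k}\lambda_i(\omace{A})$ and the second factor (a product of entries each in an interval whose right endpoint could be positive or whose sign varies) is where the obstacle sits.

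Given the surrounding remark in the paper (\quo{Since the upper bounds for the eigenvalues intervals are attained for the same matrix $\omace{A}$ and analogously for the lower bounds}), the intended argument is the direct one: because $\det$ as a function on the box $\smace{A}$ is continuous and its range is an interval, and because $\det(A)=\prod_i\lambda_i(A)$ with all $\lambda_i$ simultaneously maximized at $\omace{A}$ and simultaneously minimized at $\umace{A}$, and moreover under the positive semidefinite hypothesis all these bounding quantities $\lambda_i(\omace{A})$ are nonnegative so the product is monotone nondecreasing in each $\lambda_i$ on the relevant range — I would spell out that for $\mace A \in \smace A$, $\det(\mace A) \le \prod_i \lambda_i(\omace A) = \det(\omace A)$ because each $\lambda_i(\mace A)\le\lambda_i(\omace A)$ and, on the range $[\lambda_n(\umace A),\lambda_1(\omace A)]$ with the upper endpoints nonnegative, increasing any one coordinate toward $\lambda_i(\omace A)$ cannot decrease the product past $\det(\omace A)$. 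The lower bound $\det(\umace{A})\le\det(\mace A)$ follows by the same monotonicity applied to $-\mace A$ or by noting $\det(\umace A)$ is the product of the smallest eigenvalues.

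I expect the main obstacle to be making the monotonicity of the product $\prod_i\lambda_i$ rigorous when some of the lower eigenvalue bounds $\lambda_i(\umace{A})$ are negative: a naive "product is monotone in each factor" is false once signs mix. The positive semidefiniteness of $\Mid{A}$ is exactly the hypothesis that keeps the number of possibly-negative eigenvalues controlled, so the clean way to close the argument is to split off the guaranteed-nonnegative eigenvalues $\lambda_1,\dots,\lambda_k$ (those with $\lambda_k(\umace A)\ge 0$) from the remaining ones and handle the latter group using the fact that $\lambda_i(\mace A)$ for $i>k$ all lie in intervals with $\lambda_i(\omace A)\ge 0\ge \lambda_i(\umace A)$, together with the standard result (\cite{KreLak1998,Roh1994b}-style, or directly from the Courant--Fischer estimate already invoked) that the extreme values of the product over the box are attained at the two vertices $\umace A$ and $\omace A$. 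This last reduction to vertices is where I would lean on the preceding corollary rather than re-derive it.
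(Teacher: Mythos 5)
You have correctly located the critical step: the monotonicity of the product $\prod_i\lambda_i$ over the box $\prod_i[\lambda_i(\umace{A}),\lambda_i(\omace{A})]$ breaks down as soon as some of the lower endpoints are negative. But the patch you sketch (splitting off the guaranteed-nonnegative eigenvalues and then invoking ``the standard result that the extreme values of the product over the box are attained at the two vertices $\umace{A}$ and $\omace{A}$'') is never actually carried out, and it cannot be: that ``standard result'' is precisely the claim to be proved, and under the stated hypothesis it is false. Positive semidefiniteness of $\Mid{A}$ gives no control whatsoever on the inertia of $\umace{A}=\Mid{A}-\Rad{A}$; all of its eigenvalues may be negative. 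Concretely, take $\Mid{A}=0$ and $\Rad{A}=I_2$, so that $\imace{A}=\{\diag(a,b)\mmid a,b\in[-1,1]\}$ is diagonally interval with symmetric positive semidefinite midpoint. Then $\det(\imace{A})=\{ab\mmid a,b\in[-1,1]\}=[-1,1]$, whereas $\det(\umace{A})=\det(\omace{A})=1$. The lower-bound half of your argument (``$\det(\umace{A})$ is the product of the smallest eigenvalues'') is exactly what fails: $\lambda_1(\umace{A})=\lambda_2(\umace{A})=-1$ multiply to $+1$, which is an upper rather than a lower estimate for $\det(\diag(1,-1))=-1$. So the obstacle you flagged is not a technicality to be finessed; it is a genuine counterexample to the statement as written.

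Your route is the same as the paper's intended one (the preceding remark about all eigenvalue bounds being attained simultaneously at $\umace{A}$ and at $\omace{A}$), so the comparison here is that both arguments stall at the same point, and yours at least names it. The argument does close cleanly if the hypothesis ``$\Mid{A}$ positive semidefinite'' is strengthened to ``$\umace{A}$ positive semidefinite'' (equivalently, every $A\in\imace{A}$ is positive semidefinite, since $A-\umace{A}$ is a nonnegative diagonal matrix): then each eigenvalue interval $[\lambda_i(\umace{A}),\lambda_i(\omace{A})]$ lies in $[0,\infty)$, a product of factors ranging over nonnegative intervals is nondecreasing in each factor, all $n$ factors are simultaneously minimized at $\umace{A}$ and maximized at $\omace{A}$, and continuity of the determinant fills in the interval. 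Under the hypothesis as stated, no completion of the proof is possible.
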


In Kosheleva et al.~\cite{KoshKre2005}, it was shown that computing the cube of an interval matrix is an NP-hard problem. Here, we show that it is a polynomial problem provided $\imace{A}\in\IR^{n\times n}$ is diagonally interval. The cube is naturally defined as $\imace{A}^3\coloneqq\{A^3\mmid A\in\imace{A}\}$. It needn't be an interval matrix, so the problem practically is to determine the interval matrix $\ihull\imace{A}^3$.

We will compute the cube entrywise. Let $i,j\in\seznam{n}$ and suppose that $i\not=j$; the case $i=j$ is dealt with analogously. Then the problem is to determine the range of $A^3_{ij}=\sum_{k,\ell}a_{ik}a_{k\ell}a_{\ell j}$ on $a_{kk}\in\inum{a}_{kk}$, $k=1,\dots,n$. This function is linear in $a_{kk}$ for $k\not=i,j$, so we can fix the values of these parameters on the lower or upper bounds, depending on the signs of the corresponding coefficients. Thus the function $A^3_{ij}$ reduces to a quadratic function of variables $a_{ii}$ and $a_{jj}$ only. This can be resolved by brute force by binary search or by utilizing optimality criteria from mathematical programming -- notice that we minimize/maximize quadratic function on a two-dimensional rectangle.

Therefore, we have:

\begin{theorem}
Computing $\ihull\imace{A}^3$ is a polynomial problem for $\imace{A}$ diagonally interval.
\end{theorem}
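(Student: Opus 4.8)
The plan is to make rigorous the entrywise argument already sketched in the text preceding the statement, and in particular to pin down why the reduction to a two-variable quadratic optimization is genuinely polynomial. First I would fix indices $i,j\in\seznam{n}$ (treating $i=j$ the same way at the end) and write $A^3_{ij}=\sum_{k,\ell=1}^n a_{ik}a_{k\ell}a_{\ell j}$, observing that since $\imace{A}$ is diagonally interval the only interval entries are the diagonal ones $a_{kk}\in\inum{a}_{kk}$, while all off-diagonal entries are fixed reals. So $A^3_{ij}$ is a polynomial in the $n$ real variables $t_k\coloneqq a_{kk}$, and the goal is to compute $\min$ and $\max$ of this polynomial over the box $\prod_k\inum{a}_{kk}$.

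Next I would identify the degree of each variable in the monomial expansion. A term $a_{ik}a_{k\ell}a_{\ell j}$ involves a diagonal entry only when one of the three factors is itself diagonal, i.e.\ when $i=k$, or $k=\ell$, or $\ell=j$. A short case analysis shows that a product of the form $t_p t_q$ with $p\neq q$ can arise only with $\{p,q\}\subseteq\{i,j\}$ (for instance the term $a_{ii}a_{ii}a_{ij}$ contributes $t_i^2$, and $a_{ii}a_{ij}a_{jj}$ contributes $t_i t_j$), and no cubic monomial $t_p^3$ or mixed monomial of higher degree survives because each of the three factors shares at most one index with the next. Consequently, for every fixed choice of the remaining variables $t_k$ with $k\notin\{i,j\}$, the function $A^3_{ij}$ is \emph{affine} in each such $t_k$; hence its extrema over the box are attained at a vertex in those coordinates, and the sign of the coefficient of $t_k$ (which is itself an affine function of $t_i,t_j$, but whose sign we can handle by splitting the box $\inum{a}_{ii}\times\inum{a}_{jj}$ into the at most four sub-rectangles on which each such coefficient has constant sign) tells us whether to set $t_k=\unum{a}_{kk}$ or $t_k=\onum{a}_{kk}$. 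After this substitution $A^3_{ij}$ becomes a quadratic polynomial in the two variables $t_i,t_j$ on a rectangle, and optimizing a quadratic over a rectangle is a constant-size problem: examine the unconstrained critical point, the four edges (each a one-dimensional quadratic), and the four corners, taking the best feasible value. Doing this for all $n^2$ pairs $(i,j)$ — and the analogous, in fact simpler, computation for the diagonal entries $A^3_{ii}$, where only $t_i$ is relevant and the function is cubic in the single variable $t_i$ — yields all $2n^2$ endpoints of $\ihull\imace{A}^3$ in polynomial time.

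The main obstacle, and the place where care is needed, is exactly the dependence of the "switch" thresholds for the variables $t_k$, $k\notin\{i,j\}$, on $t_i$ and $t_j$: one cannot simply fix each $t_k$ at a vertex independently of the others because the coefficient of $t_k$ in $A^3_{ij}$ may change sign as $t_i,t_j$ vary. The clean way around this is the sub-rectangle decomposition mentioned above: each coefficient of a $t_k$ is affine in $(t_i,t_j)$, so the at most $n-2$ of them partition $\inum{a}_{ii}\times\inum{a}_{jj}$ into a polynomially bounded arrangement of cells (alternatively, and more crudely but still polynomially, one can just enumerate all $2^{?}$—no: one enumerates over the signs but there are only $n-2$ of them, giving $2^{n-2}$, which is \emph{not} polynomial, so the arrangement argument is the right one) on each of which all switches are determined, reducing to a fixed-size quadratic program per cell. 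I would present the arrangement-of-lines bound explicitly to make the polynomiality transparent, and remark that the diagonal case $i=j$ needs only the single-variable reduction. This completes the argument.
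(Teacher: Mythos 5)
Your overall strategy is the paper's: compute $\ihull\imace{A}^3$ entrywise, exploit that only the diagonal entries $t_k=a_{kk}$ are intervals, fix the variables that enter linearly at endpoints according to the signs of their coefficients, and finish with a two-variable quadratic (resp.\ one-variable cubic) optimization over a box. The one place you diverge --- the arrangement-of-lines decomposition of $\inum{a}_{ii}\times\inum{a}_{jj}$ --- is a workaround for an obstacle that does not exist. For $k\notin\{i,j\}$, the variable $t_k$ occurs in $A^3_{ij}=\sum_{p,q}a_{ip}a_{pq}a_{qj}$ only through the single term with $p=q=k$ (the cases $p=i=k$ and $q=j=k$ being excluded because $k\neq i$ and $k\neq j$), so its coefficient is $a_{ik}a_{kj}$, a product of two off-diagonal, hence point, entries: a fixed real number, \emph{not} an affine function of $t_i,t_j$. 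In particular no cross terms $t_kt_i$, $t_kt_j$ or $t_kt_\ell$ arise, the objective is additively separable into a quadratic in $(t_i,t_j)$ plus a linear form in the remaining $t_k$ with constant coefficients, and each such $t_k$ can be fixed at $\unum{a}_{kk}$ or $\onum{a}_{kk}$ once and for all by inspecting $\sgn(a_{ik}a_{kj})$ --- which is exactly what the paper does. Your cell decomposition is therefore not wrong (it trivializes to a single cell), but presenting it as the ``main obstacle'' misdescribes the structure of the problem; replace it with the explicit computation of the coefficient above. Two smaller points: for the diagonal entries $A^3_{ii}$ the other variables $t_k$, $k\neq i$, do still appear (linearly, with constant coefficients $a_{ik}a_{ki}$), so ``only $t_i$ is relevant'' should be ``after fixing the linearly occurring variables, a univariate cubic in $t_i$ remains''; and your enumeration of the interior critical point, the edge restrictions and the corners for the terminal quadratic subproblem is a sound way to make concrete the paper's appeal to ``optimality criteria from mathematical programming.''
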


\section{Nonnegative matrices}\label{sNonneg}

For a (componentwisely) nonnegative matrix $A\in\R^{n\times n}$, the Perron theory says that its spectral radius $\rho(A)$ is attained as the eigenvalue. Let $\imace{A}\in\IR^{n\times n}$. Obviously, it is nonnegative if and only if $\umace{A}\geq0$.
In some situations, however, it is not necessary to assume that all matrices in $\imace{A}$ are nonnegative, but it is sufficient to assume that $\Mid{A}\geq0$.
First, we consider the spectral radius.

\begin{theorem}
We have:
\begin{enumerate}[(i)]
\item
If $\Mid{A}\geq0$, then $\onum{\rho}(\imace{A})=\rho(\omace{A})$.
\item
If $\imace{A}$ is nonnegative, then $\rho(\imace{A})=[\rho(\umace{A}),\rho(\omace{A})]$.
\end{enumerate}
\end{theorem}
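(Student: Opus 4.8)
The plan is to reduce both parts to monotonicity of the spectral radius of nonnegative matrices under the entrywise order, combined with the fact that the perturbation $\Rad{A}$ allows us to reach the extremal matrices $\umace{A}$ and $\omace{A}$ inside $\imace{A}$. The classical fact I would invoke is the following: if $0\le B\le C$ entrywise, then $\rho(B)\le\rho(C)$; this is standard Perron--Frobenius theory (e.g.\ Horn and Johnson). For part (ii) this already does most of the work: for every $A\in\imace{A}$ we have $0\le\umace{A}\le A\le\omace{A}$, hence $\rho(\umace{A})\le\rho(A)\le\rho(\omace{A})$, and since $\umace{A},\omace{A}\in\imace{A}$ both endpoints are attained. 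Continuity of $\rho$ on $\imace{A}$ (the spectral radius is a continuous function of the matrix entries, and $\imace{A}$ is connected) then gives $\rho(\imace{A})=[\rho(\umace{A}),\rho(\omace{A})]$.

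For part (i), only $\Mid{A}\ge0$ is assumed, so individual $A\in\imace{A}$ need not be nonnegative and the monotonicity argument cannot be applied directly. Here I would argue with absolute values: for any $A\in\imace{A}$ we have $|A|\le\Mid{A}+\Rad{A}=\omace{A}$ entrywise, and $\omace{A}\ge0$ since $\omace{A}=\Mid{A}+\Rad{A}$ with both summands nonnegative. The standard comparison $\rho(A)\le\rho(|A|)$ (valid for any matrix; again Perron--Frobenius) together with monotonicity $\rho(|A|)\le\rho(\omace{A})$ yields $\rho(A)\le\rho(\omace{A})$ for every $A\in\imace{A}$. Since $\omace{A}\in\imace{A}$, the bound is attained, so $\onum{\rho}(\imace{A})=\rho(\omace{A})$.

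The only point requiring a little care is whether we may really assume $\omace{A}\ge0$ in part (i): this is immediate from $\omace{A}=\Mid{A}+\Rad{A}$, $\Mid{A}\ge0$ by hypothesis and $\Rad{A}\ge0$ by definition of an interval matrix. I would expect this to be the sole "obstacle," and it is a triviality; the substance of the argument is entirely carried by the two cited Perron--Frobenius inequalities $\rho(A)\le\rho(|A|)$ and monotonicity of $\rho$ on nonnegative matrices. No new estimates are needed.
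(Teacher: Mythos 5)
Your proposal is correct and follows essentially the same route as the paper: part (i) via $|A|\le\omace{A}$ together with $\rho(A)\le\rho(|A|)\le\rho(\omace{A})$, and part (ii) via monotonicity of the spectral radius on nonnegative matrices. You merely spell out the attainment of the endpoints and the continuity/connectedness argument, which the paper leaves implicit.
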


\begin{proof}
For every $A\in\imace{A}$, $|A|\leq\Mid{A}+\Rad{A}=\omace{A}$, whence $\rho(A)\leq\rho(\omace{A})$.
If in addition $\umace{A}\geq0$, then $\rho(\umace{A})\leq\rho(A)$ for every $A\in\imace{A}$.
\end{proof}

Analogously, we obtain:

\begin{theorem}
We have:
\begin{enumerate}[(i)]
\item
If $\Mid{A}\geq0$, then $\onum{\lambda}_{\max}(\imace{A})=\lambda_{\max}(\omace{A})$.
\item
If $\imace{A}$ is nonnegative, then $\lambda_{\max}(\imace{A})=[\lambda_{\max}(\umace{A}),\lambda_{\max}(\omace{A})]$.
\end{enumerate}
\end{theorem}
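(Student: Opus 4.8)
The plan is to imitate the proof of the preceding theorem on the spectral radius, replacing $\rho$ by $\lambda_{\max}$ and using the analogue of the Perron--Frobenius estimate for eigenvalues of (entrywise) nonnegative symmetric matrices, namely the monotonicity of $\lambda_{\max}$ with respect to the entrywise order on nonnegative matrices. Note first that when we write $\lambda_{\max}(\imace{A})$ we mean the range of $\lambda_{\max}$ over the symmetric interval matrix $\smace{A}$, so implicitly $\Mid{A}$ and $\Rad{A}$ are assumed symmetric; every matrix considered below is symmetric.

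For part~(i), I would start from the variational (Rayleigh) characterisation $\lambda_{\max}(A)=\max_{\|x\|=1}x^TAx$. Fix $A\in\smace{A}$ and let $x$ be a unit eigenvector of $A$ for $\lambda_{\max}(A)$; replacing $x$ by $|x|$ can only increase the Rayleigh quotient once we pass to a dominating nonnegative matrix, so
$$
\lambda_{\max}(A)=x^TAx\leq |x|^T|A|\,|x|\leq |x|^T\omace{A}\,|x|\leq\lambda_{\max}(\omace{A}),
$$
using $|A|\leq\Mid{A}+\Rad{A}=\omace{A}$ (here $\Mid{A}\geq0$ guarantees $\omace{A}\geq|A|$ entrywise, and $\omace{A}$ is symmetric nonnegative). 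Since $\omace{A}\in\smace{A}$, the bound $\lambda_{\max}(\omace{A})$ is attained, giving $\onum{\lambda}_{\max}(\imace{A})=\lambda_{\max}(\omace{A})$.

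For part~(ii), in addition $\umace{A}\geq0$, so every $A\in\smace{A}$ is symmetric nonnegative. Monotonicity of the largest eigenvalue on symmetric nonnegative matrices under the entrywise order then yields $\lambda_{\max}(\umace{A})\leq\lambda_{\max}(A)\leq\lambda_{\max}(\omace{A})$ for every $A\in\smace{A}$; both endpoints are attained at $\umace{A},\omace{A}\in\smace{A}$, and by continuity of $\lambda_{\max}$ the range is the full interval $[\lambda_{\max}(\umace{A}),\lambda_{\max}(\omace{A})]$.

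The only genuine point to be careful about is the entrywise monotonicity used in (i) and (ii): for general symmetric matrices $\lambda_{\max}$ is not monotone under the entrywise order, and the step $x^TAx\leq|x|^T\omace{A}|x|$ really does require $\omace{A}$ (and in (ii) also $\umace{A}$ and $A$) to be nonnegative so that a Perron-type eigenvector with a constant sign can be chosen for the dominating matrix and the inequality $u^TBu\leq|u|^TB|u|$ holds when $B\geq0$. This is exactly where the nonnegativity hypothesis (resp. $\Mid{A}\geq0$) enters, mirroring the spectral-radius proof, so no new obstacle arises beyond invoking that standard fact.
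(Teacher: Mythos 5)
Your proposal is correct and is essentially the paper's intended argument: the paper proves this theorem only by the word ``analogously'' pointing to the preceding spectral-radius theorem, whose whole content is the entrywise domination $|A|\leq\Mid{A}+\Rad{A}=\omace{A}$ (resp.\ $\umace{A}\leq A$) combined with monotonicity of the relevant spectral quantity, and that is exactly the skeleton of your proof. The one small difference is how monotonicity is realized: you use the Rayleigh quotient, which tacitly restricts the statement to the symmetric members of $\imace{A}$ (consistent with the paper's definition of $\lambda_{\max}$ but not with the way the theorem is phrased for $\imace{A}$ rather than $\smace{A}$), whereas the Perron-based chain $\lambda_{\max}(A)\leq\rho(A)\leq\rho(|A|)\leq\rho(\omace{A})=\lambda_{\max}(\omace{A})$ used for the spectral-radius theorem also covers the largest \emph{real} eigenvalue of nonsymmetric realizations; either reading is defensible, and your argument is complete for the symmetric one.
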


\begin{theorem}\label{thmNonnegSvd}
If $\imace{A}$ is nonnegative, then $\sigma_{\max}(\imace{A})=[\sigma_{\max}(\umace{A}),\sigma_{\max}(\omace{A})]$.
\end{theorem}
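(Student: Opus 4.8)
The plan is to reduce the statement to a bound on the largest singular value $\sigma_{\max}$ in terms of the spectral radius of a nonnegative matrix, and then invoke the already–proved spectral radius result. Recall that for any $A\in\R^{n\times n}$ one has $\sigma_{\max}(A)=\sqrt{\rho(A^TA)}=\sqrt{\lambda_{\max}(A^TA)}$, and more conveniently $\sigma_{\max}(A)=\rho\big(\begin{smallmatrix}0&A\\A^T&0\end{smallmatrix}\big)$, i.e. the spectral radius of the symmetric $2n\times2n$ companion matrix. The key monotonicity fact is the following: if $0\le B\le C$ entrywise, then $\sigma_{\max}(B)\le\sigma_{\max}(C)$. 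This follows because $0\le B^TB\le C^TC$ entrywise (products and sums of nonnegative matrices preserve the order), and for nonnegative matrices $\rho$ is monotone with respect to the entrywise order, so $\rho(B^TB)\le\rho(C^TC)$, giving $\sigma_{\max}(B)\le\sigma_{\max}(C)$ after taking square roots.

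With this in hand the proof is short. Let $A\in\imace{A}$. Since $\imace{A}$ is nonnegative we have $0\le\umace{A}\le A\le\omace{A}$ entrywise, so by the monotonicity fact $\sigma_{\max}(\umace{A})\le\sigma_{\max}(A)\le\sigma_{\max}(\omace{A})$. Because $\umace{A},\omace{A}\in\imace{A}$, both endpoints are actually attained. Finally, $\sigma_{\max}$ is a continuous function on $\R^{n\times n}$ and $\imace{A}$ is a connected (indeed convex, compact) set, so $\sigma_{\max}(\imace{A})$ is a compact interval; combined with the endpoint bounds this yields $\sigma_{\max}(\imace{A})=[\sigma_{\max}(\umace{A}),\sigma_{\max}(\omace{A})]$.

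First I would state and justify the monotonicity lemma for $\sigma_{\max}$ under the entrywise order on nonnegative matrices; then I would apply it to the two extreme matrices $\umace{A}$ and $\omace{A}$; then I would close the argument with the connectedness/continuity remark to pass from the pointwise inequalities to the full range. An alternative to the companion–matrix reformulation is to use directly $\sigma_{\max}(A)=\max_{\|x\|_2=\|y\|_2=1} y^TAx$ and note that for nonnegative $A$ this maximum may be restricted to $x,y\ge0$ (replacing coordinates by their absolute values can only increase the value, since all entries of $A$ are nonnegative), which makes the monotonicity in $A$ immediate.

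The main obstacle — really the only nontrivial point — is the monotonicity of $\sigma_{\max}$ under the entrywise order, and in particular the implied restriction of the optimizing vectors to the nonnegative orthant; the nonnegativity of $\imace{A}$ (not just $\Mid{A}\ge0$) is exactly what is needed here, which is why, unlike the spectral–radius and $\lambda_{\max}$ statements, there is no one-sided version under the weaker hypothesis $\Mid{A}\ge0$. Everything else (attainment at the endpoints, interval structure) is routine.
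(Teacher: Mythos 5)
Your proof is correct and follows essentially the route the paper intends: the theorem is stated there without an explicit proof, as an analogue of the spectral-radius result, and your reduction $\sigma_{\max}(A)=\sqrt{\rho(A^TA)}$ together with entrywise monotonicity of $\rho$ on nonnegative matrices (or, equivalently, the variational characterization $\max_{\|x\|_2=\|y\|_2=1}y^TAx$ with nonnegative optimizers) is exactly that analogy made explicit, and the attainment/connectedness closing step is fine. One side remark in your last paragraph is mistaken, though it does not affect the proof of the stated theorem: the one-sided bound $\onum{\sigma}_{\max}(\imace{A})=\sigma_{\max}(\omace{A})$ \emph{does} hold under the weaker hypothesis $\Mid{A}\geq0$, because $\sigma_{\max}=\|\cdot\|_2$ is a monotone norm ($|A|\leq\omace{A}$ gives $\|A\|_2\leq\||A|\|_2\leq\|\omace{A}\|_2$, using $\|Ax\|_2\leq\||A|\,|x|\|_2$), a case covered by the paper's subsequent theorem on monotone matrix norms.
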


Recall that a matrix norm is monotone if $|A|\leq B$ implies $\|A\|\leq\|B\|$. This is satisfied for most of the norms used. For instance, any induced $p$-norm, $\|\cdot\|_{\infty,1}$ norm, Frobenius norm or the Chebyshev norm are monotone.

\begin{theorem}
For every monotone matrix norm we have
\begin{enumerate}[(i)]
\item
If $\Mid{A}\geq0$, then $\onum{\|\imace{A}\|}=\|\omace{A}\|$.
\item
If $\imace{A}$ is nonnegative, then $\|\imace{A}\|=[\|\umace{A}\|,\|\omace{A}\|]$.
\end{enumerate}
\end{theorem}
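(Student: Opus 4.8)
The plan is to follow the same template as the preceding theorems on $\rho(\imace{A})$ and $\lambda_{\max}(\imace{A})$: replace the Perron-theoretic inequalities by monotonicity of the norm, and use that the extreme realizations $\umace{A}$ and $\omace{A}$ both lie in $\imace{A}$.

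For part (i), first check that $|A|\leq\omace{A}$ for every $A\in\imace{A}$. Indeed $A\leq\Mid{A}+\Rad{A}=\omace{A}$, while $-A\leq-\umace{A}=-\Mid{A}+\Rad{A}\leq\Mid{A}+\Rad{A}=\omace{A}$, where the last inequality uses $\Mid{A}\geq0$. Since $\omace{A}=\Mid{A}+\Rad{A}\geq0$, monotonicity of the norm gives $\|A\|\leq\|\omace{A}\|$; and because $\omace{A}\in\imace{A}$ this bound is attained, so $\onum{\|\imace{A}\|}=\|\omace{A}\|$.

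For part (ii), nonnegativity of $\imace{A}$ means $\umace{A}\geq0$, hence every $A\in\imace{A}$ is itself nonnegative and satisfies $|\umace{A}|=\umace{A}\leq A$. Monotonicity (with the nonnegative matrix $A$ as the dominating matrix) then gives $\|\umace{A}\|\leq\|A\|$. Together with part (i)---whose hypothesis $\Mid{A}\geq0$ holds here---this yields $\|\umace{A}\|\leq\|A\|\leq\|\omace{A}\|$ for all $A\in\imace{A}$, so in particular $\|\umace{A}\|\leq\|\omace{A}\|$ and $[\|\umace{A}\|,\|\omace{A}\|]$ is a well-defined interval. Finally, $A\mapsto\|A\|$ is continuous and $\imace{A}$ is a box, hence connected, so $\|\imace{A}\|$ is an interval; its two endpoints are attained at $\umace{A}$ and $\omace{A}$, whence $\|\imace{A}\|=[\|\umace{A}\|,\|\omace{A}\|]$.

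I do not anticipate a real obstacle. The only step that needs a little care is the inequality $|A|\leq\omace{A}$ under the weaker hypothesis $\Mid{A}\geq0$ (rather than $\umace{A}\geq0$); this is precisely where componentwise nonnegativity of the midpoint is used, mirroring the corresponding step in the spectral-radius theorem above, and everything else is routine bookkeeping with monotonicity and continuity.
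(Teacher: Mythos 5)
Your proof is correct and follows essentially the same route as the paper: establish $|A|\leq\omace{A}$ for every $A\in\imace{A}$ (under the weaker hypothesis $\Mid{A}\geq0$) and invoke monotonicity for the upper endpoint, then use $\umace{A}\leq A$ and monotonicity again for the lower endpoint when $\umace{A}\geq0$. The extra remarks on attainment at the vertex matrices and on continuity/connectedness making $\|\imace{A}\|$ an interval are points the paper leaves implicit, but they are the same argument.
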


\begin{proof}
For every $A\in\imace{A}$, we have $|A|\leq\omace{A}$, and therefore $\|A\|\leq\|\omace{A}\|$.
If in addition $\umace{A}\geq0$, then $\|\umace{A}\|\leq \|A\|$ for every $A\in\imace{A}$.
\end{proof}

Nonnegative matrices are also useful for computing high powers of them. Recall that by definition, $\imace{A}^k=\{A^k\mmid A\in\imace{A}\}$. Notice that not every matrix in $[\umace{A}^k,\omace{A}^k]$ is achieved as the $k$th power of some $A\in\imace{A}$, so $\imace{A}^k$ is not an interval matrix.

\begin{theorem}
If $\imace{A}$ is nonnegative, then $\ihull\imace{A}^k=[\umace{A}^k,\omace{A}^k]$.
\end{theorem}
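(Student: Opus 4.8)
The plan is to prove the two inclusions $\imace{A}^k\subseteq[\umace{A}^k,\omace{A}^k]$ and $[\umace{A}^k,\omace{A}^k]\subseteq\ihull\imace{A}^k$ separately; the first gives an outer bound, and the second shows this bound is tight (indeed that both corner matrices are actually attained). First I would establish the easy direction. Take any $A\in\imace{A}$. Since $\imace{A}$ is nonnegative we have $0\leq\umace{A}\leq A\leq\omace{A}$ entrywise. Because the product of nonnegative matrices is monotone in each factor with respect to the entrywise order -- this is just the observation that $(XY)_{ij}=\sum_\ell X_{i\ell}Y_{\ell j}$ is nondecreasing in each entry of $X$ and of $Y$ as long as all entries involved are nonnegative -- an induction on $k$ yields $\umace{A}^k\leq A^k\leq\omace{A}^k$. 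Hence $A^k\in[\umace{A}^k,\omace{A}^k]$, proving $\imace{A}^k\subseteq[\umace{A}^k,\omace{A}^k]$.

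For the reverse direction it suffices to show that the two endpoint matrices $\umace{A}^k$ and $\omace{A}^k$ both belong to $\imace{A}^k$, since then any interval vector (matrix) containing $\imace{A}^k$ must contain both of them and hence contain all of $[\umace{A}^k,\omace{A}^k]$; that is, $[\umace{A}^k,\omace{A}^k]$ is the smallest interval matrix containing $\imace{A}^k$, which is precisely the definition of $\ihull\imace{A}^k$. But $\umace{A}\in\imace{A}$ and $\omace{A}\in\imace{A}$, so $\umace{A}^k=(\umace{A})^k\in\imace{A}^k$ and $\omace{A}^k=(\omace{A})^k\in\imace{A}^k$ trivially. Combining with the first inclusion gives $\imace{A}^k\subseteq[\umace{A}^k,\omace{A}^k]$ and $\{\umace{A}^k,\omace{A}^k\}\subseteq\imace{A}^k$, and therefore $\ihull\imace{A}^k=[\umace{A}^k,\omace{A}^k]$.

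There is essentially no obstacle here; the only point requiring a word of care is the monotonicity of the matrix product under the entrywise order, which genuinely needs nonnegativity (for general matrices $\umace{A}\leq A\leq\omace{A}$ does not imply $\umace{A}^2\leq A^2$), and this is exactly the hypothesis we are given. I would present the monotonicity step as the core lemma of the argument, noting that it is what forces the extremal behaviour to the corners $\umace{A}$ and $\omace{A}$, in contrast to the situation for a general (or even diagonally) interval matrix where, as the cube example earlier in the paper shows, the range of $\imace{A}^k$ need be neither an interval nor determined by the endpoint matrices.
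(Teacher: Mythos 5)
Your proof is correct and follows essentially the same route as the paper, which simply observes that $\umace{A}^k\leq A^k\leq\omace{A}^k$ for every $A\in\imace{A}$ (with the attainment at the endpoint matrices left implicit). You have merely filled in the details — the entrywise monotonicity of the product of nonnegative matrices and the fact that $\umace{A},\omace{A}\in\imace{A}$ make the bounds tight — so no further comment is needed.
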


\begin{proof}
Obviously, for every $A\in\imace{A}$, we have $\umace{A}^k\leq A^k\leq \omace{A}^k$. 
\end{proof}

\section{Inverse M-matrices}

A matrix $A\in\R^{n\times n}$ is an \emph{inverse M-matrix} \cite{JohSmi2011} if is nonsingular and $A^{-1}$ is an M-matrix. This represents another easily recognizable sub-class of P-matrices.
Recall that a vertex matrix of  $\imace{A}$ is a matrix $A\in\imace{A}$ such that $a_{ij}\in\{\unum{a}_{ij},\onum{a}_{ij}\}$ for all $i,j$.
Johnson and Smith \cite{JohSmi2002,JohSmi2011} showed that $\imace{A}$ is an inverse M-matrix if and only if all vertex matrices are. This reduces the problem to $2^{n^2}$ real matrices. Neither a polynomial reduction is know, nor NP-hardness was proved. So the computational complexity of checking whether an interval matrix $\imace{A}\in\IR^{n\times n}$ is an inverse M-matrix is an open problem.
It is also worth mentioning the result by Poljak and Rohn \cite{PolRoh1993,Fie2006}, who showed that checking regularity of an interval matrix $[A-ee^T,A+ee^T]$ is co-NP-hard even when $A$ is a symmetric inverse M-matrix.

Since an inverse M-matrix is nonnegative, all results from Section~\ref{sNonneg} are valid in this context, too.

For the componentwise range of inverse matrices, we have the following observation reducing the problem to $2n^2$ real matrices.

\begin{theorem}
If $\imace{A}$ is an inverse M-matrix, then 
\begin{align*}
\min_{A\in\imace{A}}\ A^{-1}
&=\min\left\{(\Mid{A}+\diag(z^i)\Rad{A}\diag(z^j))^{-1}\mmid i,j=1,\dots,n\right\},\\
\max_{A\in\imace{A}}\ A^{-1}
&=\max\left\{(\Mid{A}-\diag(z^i)\Rad{A}\diag(z^j))^{-1}\mmid i,j=1,\dots,n\right\},
\end{align*}
where the minimum is understood componentwisely and $z^i:=(1,\dots,1,-1,1,\dots,1)^T$ has $-1$ in the $i$th entry and $1$ elsewhere.
\end{theorem}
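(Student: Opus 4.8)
The plan is to prove the two formulas entry by entry. Fix a row index $k$ and a column index $\ell$, and study the scalar function $f(A)\coloneqq(A^{-1})_{k\ell}$ over the box $\imace{A}$. I would show that $f$ attains its minimum over $\imace{A}$ at the vertex matrix $A^{+}\coloneqq\Mid{A}+\diag(z^{k})\Rad{A}\diag(z^{\ell})$ and its maximum at $A^{-}\coloneqq\Mid{A}-\diag(z^{k})\Rad{A}\diag(z^{\ell})$; since these two matrices are among those occurring on the right-hand sides of the claim, this is exactly what is needed.

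The argument rests on two facts. First, because $\imace{A}$ is an inverse M-matrix, every $A\in\imace{A}$ is nonsingular and $A^{-1}$ is an M-matrix, so the sign pattern $(A^{-1})_{ii}>0$ and $(A^{-1})_{ij}\le 0$ for $i\ne j$ holds for all $A\in\imace{A}$ simultaneously (positivity of the diagonal of an M-matrix is standard, and follows from $A^{-1}\ge 0$ together with the off-diagonal sign condition). Second, for nonsingular $A$ and the matrix $E$ whose only nonzero entry is a $1$ in position $(p,q)$ one has $\tfrac{d}{dt}(A+tE)^{-1}\big|_{t=0}=-A^{-1}EA^{-1}$, hence $\partial f/\partial a_{pq}=-(A^{-1})_{kp}(A^{-1})_{q\ell}$.

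Reading off the sign of this derivative gives four cases: it is strictly negative when $(p,q)=(k,\ell)$ (product of two positive diagonal entries of $A^{-1}$); it is $\ge 0$ when exactly one of $p=k$, $q=\ell$ holds (a positive diagonal entry times a nonpositive off-diagonal one); and it is $\le 0$ when $p\ne k$ and $q\ne\ell$ (product of two nonpositive off-diagonal entries). These signs depend only on the fixed inverse sign pattern, not on the particular $A\in\imace{A}$, so $f$ is monotone, with a direction that never flips, along every axis-parallel segment contained in $\imace{A}$. Starting from an arbitrary $A\in\imace{A}$ and moving the entries one at a time to the endpoint prescribed by the monotonicity direction — $a_{k\ell}$ to $\onum{a}_{k\ell}$, the remaining entries of row $k$ and of column $\ell$ to their lower bounds, and every other entry to its upper bound — the value of $f$ never increases, and the resulting vertex is exactly $A^{+}$ (a direct check: $(A^{+})_{pq}=\onum{a}_{pq}$ at the corner $(k,\ell)$ and whenever $p\ne k$ and $q\ne\ell$, while $(A^{+})_{pq}=\unum{a}_{pq}$ on the remaining entries of row $k$ and column $\ell$). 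Hence $\min_{A\in\imace{A}}(A^{-1})_{k\ell}=((A^{+})^{-1})_{k\ell}$, and the mirror-image argument yields $\max_{A\in\imace{A}}(A^{-1})_{k\ell}=((A^{-})^{-1})_{k\ell}$.

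To finish, note that each matrix $\Mid{A}+\diag(z^{i})\Rad{A}\diag(z^{j})$ is a vertex of $\imace{A}$ and therefore lies in $\imace{A}$, so the $(k,\ell)$ entry of its inverse is at least $\min_{A\in\imace{A}}(A^{-1})_{k\ell}$, with equality for $i=k$, $j=\ell$. Taking the componentwise minimum over all $i,j$ thus reproduces $\min_{A\in\imace{A}}A^{-1}$, and dually the componentwise maximum over the matrices $\Mid{A}-\diag(z^{i})\Rad{A}\diag(z^{j})$ reproduces $\max_{A\in\imace{A}}A^{-1}$. I do not expect a genuine obstacle here; the one point to handle carefully is the ``move one entry at a time'' step, where one must observe that every matrix on such a segment is again in the box (hence an inverse M-matrix), so $\partial f/\partial a_{pq}$ retains a constant sign along it — weak monotonicity, which is all that is needed, since some off-diagonal entries of $A^{-1}$ may vanish.
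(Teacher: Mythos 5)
Your proposal is correct and follows essentially the same route as the paper: differentiate $(A^{-1})_{k\ell}$ with respect to each entry, observe that the sign of $-(A^{-1})_{kp}(A^{-1})_{q\ell}$ is fixed throughout $\imace{A}$ by the M-matrix sign pattern of the inverses, and conclude that the extrema are attained at the indicated vertex matrices. Your write-up is merely more explicit than the paper's (the four sign cases, the entry-by-entry monotone path, and the check that the other $n^2-1$ vertex matrices cannot undercut the minimum), all of which is sound.
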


\begin{proof}
The derivative of the inverse is $\frac{\partial (A^{-1})_{ij}}{\partial a_{k\ell}}
=- (A^{-1})_{ik}(A^{-1})_{\ell j}$, or in a matrix form, $\frac{\partial (A^{-1})_{ij}}{\partial A}=- (A^{-T})_{*i}(A^{-T})_{j*}$. It has constant signs, so the minimum value of $(A^{-1})_{ij}$ is attained for the matrix $\Mid{A}+\diag(z^j)\Rad{A}\diag(z^i)$, and analogously the maximum.
\end{proof}

This characterization leads us to the open problem:

\begin{conjecture}
$\imace{A}$ is an inverse M-matrix if and only if $\Mid{A}\pm\diag(z^i)\Rad{A}\diag(z^j))$, $i,j=1,\dots,n$, are inverse M-matrices.
\end{conjecture}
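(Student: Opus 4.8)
The ``only if'' implication is immediate, since each of the $2n^2$ matrices $\Mid{A}\pm\diag(z^i)\Rad{A}\diag(z^j)$ lies in $\imace{A}$: its entrywise distance from $\Mid{A}$ equals $\Rad{A}$. So everything is in the converse. I would first record the convenient reformulation that, by the definitions used in the paper, $A$ is an inverse M-matrix if and only if $A$ is nonsingular, $A\geq0$, and $(A^{-1})_{ij}\leq0$ for all $i\neq j$ (the remaining requirement $(A^{-1})^{-1}\geq0$ for $A^{-1}$ to be an M-matrix is just $A\geq0$). Thus the converse asks us to verify these three properties for every $A\in\imace{A}$ under the hypothesis that the $2n^2$ corners satisfy them. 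Nonnegativity is the easy part: for any position $(k,\ell)$, choose $i=k$ and any $j\neq\ell$, so that $(z^i)_k(z^j)_\ell=-1$ and hence the $(k,\ell)$ entry of the corner $\Mid{A}+\diag(z^i)\Rad{A}\diag(z^j)$ equals $\unum{a}_{k\ell}$ (the case $n=1$ being trivial); as that corner is $\geq0$, we get $\umace{A}\geq0$, hence $A\geq0$ for all $A\in\imace{A}$. What remains is nonsingularity of every $A\in\imace{A}$ and nonpositivity of the off-diagonal entries of $A^{-1}$.

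The building block I would prove next is the single-entry case. If $\Rad{A}$ has a unique nonzero entry, at position $(k,\ell)$, write $E$ for the matrix with a single $1$ there; then $\imace{A}=\{A_0+sE\mmid s\in[0,2r]\}$ where $A_0=\umace{A}$ and $A_0+2rE=\omace{A}$, and the $2n^2$ corners collapse to just $A_0$ and $A_0+2rE$, assumed inverse M-matrices. By the Sherman--Morrison formula, $\det(A_0+sE)=\det(A_0)\bigl(1+s(A_0^{-1})_{\ell k}\bigr)$, which is affine in $s$ and positive at both $s=0$ and $s=2r$ (the determinant of an M-matrix is positive, since all its eigenvalues have positive real part; hence so is the determinant of its inverse), so $A_0+sE$ is nonsingular throughout $[0,2r]$ and $\phi(s)\coloneqq1+s(A_0^{-1})_{\ell k}>0$ there. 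Then $((A_0+sE)^{-1})_{pq}=(A_0^{-1})_{pq}-(A_0^{-1})_{pk}(A_0^{-1})_{\ell q}\,\psi(s)$ with $\psi(s)=s/\phi(s)$ strictly increasing on $[0,2r]$; being affine in $\psi(s)$, this entry lies between its values at $s=0$ and $s=2r$, i.e.\ between $(A_0^{-1})_{pq}$ and $(\omace{A}^{-1})_{pq}$, both $\leq0$ when $p\neq q$. Together with $A_0+sE\geq A_0\geq0$, this gives that every matrix in the one-entry box is an inverse M-matrix.

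Propagating from one interval entry to all $n^2$ is where the genuine difficulty lies, and I expect this to be exactly why the statement is still open. The natural induction on the number of active entries breaks down: fixing an entry $a_{k\ell}$ at a value $t$ gives a box with one fewer active entry whose $2n^2$ corners are the $\Mid{A}\pm\diag(z^i)\Rad{A}\diag(z^j)$ with the $(k,\ell)$ entry overwritten by $t$ -- and these are not corners of $\imace{A}$ of the prescribed form (their $(k,\ell)$ entry is $t$, not $\unum{a}_{k\ell}$ or $\onum{a}_{k\ell}$, and even when $t$ is an endpoint, two distinct rank-one sign patterns cannot agree off a single position), so the inductive hypothesis does not apply. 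Likewise, the derivative-sign argument used for the preceding theorem on the range of $A^{-1}$ -- that $(A^{-1})_{ij}$ is extremized over $\imace{A}$ at one of these corners because $\partial(A^{-1})_{ij}/\partial a_{k\ell}=-(A^{-1})_{ik}(A^{-1})_{\ell j}$ keeps a fixed sign -- is circular here, as that sign is known only once $A$ is already an inverse M-matrix.

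To break the circularity I would attempt a continuation argument on the (path-connected) set of inverse M-matrices: if some $A^{*}\in\imace{A}$ failed to be an inverse M-matrix, then moving along a path in the convex set $\imace{A}$ from a suitable corner to $A^{*}$ one would reach a first matrix on the topological boundary of the inverse-M-matrix class -- either singular, or with $A^{-1}$ acquiring a zero off-diagonal entry about to turn positive -- and the heart of the proof, for which I see no routine argument, would be to show that such a boundary point forces one of the $2n^2$ extremal corners itself onto the boundary, contradicting the hypothesis. An alternative worth trying is to recast ``$\sup_{A\in\imace{A}}(A^{-1})_{ij}\leq0$'' as a quasiconcave maximization (or linear-fractional program) over the box $\imace{A}$ and show that its optimum is attained at a corner of the special form $\Mid{A}-\diag(z^i)\Rad{A}\diag(z^j)$, which would dispose of nonsingularity along the way.
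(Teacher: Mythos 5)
This statement is labeled a \emph{Conjecture} in the paper and is explicitly left as an open problem; the paper supplies no proof, so there is nothing to compare your argument against. Your proposal handles this correctly: you do not overclaim, and the parts you do prove are sound. The ``only if'' direction is right (each matrix $\Mid{A}\pm\diag(z^i)\Rad{A}\diag(z^j)$ differs from $\Mid{A}$ by exactly $\Rad{A}$ in absolute value, hence lies in $\imace{A}$), the reduction of inverse-M-matrix-ness to nonsingularity, $A\geq0$, and nonpositive off-diagonal entries of $A^{-1}$ is correct, the extraction of $\umace{A}\geq0$ from the corners with $i=k$, $j\neq\ell$ works, and the single-interval-entry case via Sherman--Morrison is a clean and correct self-contained lemma (the determinant is affine in $s$ and positive at both endpoints, and each entry of the inverse is monotone in $\psi(s)=s/\phi(s)$, hence sandwiched between its values at the two collapsed corners). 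Your diagnosis of the obstruction is also the right one: the derivative-sign argument that the paper uses for the preceding theorem on the componentwise range of $A^{-1}$ presupposes that every $A\in\imace{A}$ is already an inverse M-matrix, which is exactly what the conjecture asks one to establish, and a naive induction over active entries does not preserve the special corner structure. In short, you have correctly identified that the substantive direction is genuinely open, and your partial results and proposed continuation/boundary strategies are reasonable avenues rather than a completed proof --- which is consistent with the status of the statement in the paper.
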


It is also an open question whether interval systems of linear equations $\imace{A}x=\ivr{b}$ can be solved efficiently provided $\imace{A}$ is inverse M-matrix. Anyway, we can state a partial result concerning the interval hull of the solution set.

\begin{theorem}
If $\imace{A}$ is an inverse M-matrix, then $\unum{\ihull\Ss}_i$ is attained for $b:=\Mid{b}+\diag(z^i)\Rad{b}$, and  $\onum{\ihull\Ss}_i$ is attained for $b:=\Mid{b}-\diag(z^i)\Rad{b}$.
\end{theorem}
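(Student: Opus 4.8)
The plan is to argue exactly as in the proof of the preceding theorem on the componentwise range of $A^{-1}$, exploiting that, since $\imace{A}$ is an inverse M-matrix, the sign pattern of $A^{-1}$ is the \emph{same} for every $A\in\imace{A}$; consequently a single vertex of $\ivr{b}$ will be simultaneously optimal for all realizations of $\imace{A}$.

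First I would unwind the hypothesis. Saying $\imace{A}$ is an inverse M-matrix means every $A\in\imace{A}$ is nonsingular with $A^{-1}$ an M-matrix, so $(A^{-1})_{ii}>0$ and $(A^{-1})_{ij}\le0$ for $i\ne j$, and these signs are independent of the chosen $A\in\imace{A}$. Since $\imace{A}$ is in particular regular, the solution set is $\Ss=\{A^{-1}b\mmid A\in\imace{A},\ b\in\ivr{b}\}$, hence $\unum{\ihull\Ss}_i=\inf\{x_i\mmid x\in\Ss\}=\inf_{A\in\imace{A}}\ \inf_{b\in\ivr{b}}(A^{-1}b)_i$, and similarly $\onum{\ihull\Ss}_i$ with suprema.

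Next, for a fixed $A\in\imace{A}$ I would view $x_i=(A^{-1}b)_i=\sum_{j}(A^{-1})_{ij}b_j$ as a linear function of $b$ on the box $\ivr{b}$, whose coefficient of $b_j$ is $(A^{-1})_{ij}$, i.e.\ $\ge0$ for $j=i$ and $\le0$ for $j\ne i$. Optimizing a linear function over a box at the appropriate vertex, $x_i$ is minimized by taking $b_i=\unum{b}_i$ and $b_j=\onum{b}_j$ for $j\ne i$, which is exactly $b^\star:=\Mid{b}+\diag(z^i)\Rad{b}$; by the uniformity of the sign pattern, $b^\star$ does not depend on $A$. Therefore $\unum{\ihull\Ss}_i=\inf_{A\in\imace{A}}(A^{-1}b^\star)_i$, i.e.\ the lower endpoint is governed by the right-hand side $b^\star$, and reversing all signs yields $\onum{\ihull\Ss}_i$ for $b:=\Mid{b}-\diag(z^i)\Rad{b}$.

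The only genuine content beyond bookkeeping --- and the step I would be most careful about --- is the uniformity of $\sgn(A^{-1})$ over $\imace{A}$, which is precisely where being an inverse M-matrix is used (regularity or componentwise nonnegativity alone would not suffice). I would also flag a reading subtlety: the statement only asserts that the infimum defining $\unum{\ihull\Ss}_i$ equals the corresponding infimum for the point right-hand side $b^\star$; whether this infimum is itself attained, and how to evaluate it, is the separate, still-open problem of solving $\imace{A}x=b^\star$ exactly for inverse M-matrix $\imace{A}$, mentioned just above the theorem.
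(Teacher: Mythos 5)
Your proposal is correct and follows essentially the same route as the paper: fix $A\in\imace{A}$, use the M-matrix sign pattern of $A^{-1}$ (positive diagonal, nonpositive off-diagonal) to bound $x_i=\sum_j(A^{-1})_{ij}b_j$ from below by the value at the vertex $b=\Mid{b}+\diag(z^i)\Rad{b}$, and observe that this optimal vertex is the same for every $A$. Your additional remarks on the uniformity of the sign pattern and on attainment of the infimum over $A$ are accurate but do not change the argument.
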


\begin{proof}
Let $A\in\imace{A}$, $b\in\ivr{b}$ and $x:=A^{-1}b$. Then 
$$
x_i
=A^{-1}_{i*}b
=\sum_{j=1}^n (A^{-1})_{ij}b_j
\geq  (A^{-1})_{ii}\unum{b}_j + \sum_{j\not=i} (A^{-1})_{ij}\onum{b}_j
= A^{-1}_{i*} (\Mid{b}+\diag(z^i)\Rad{b}).
$$
Similarly for the upper bound.
\end{proof}

\begin{theorem}
If $\imace{A}$ is an inverse M-matrix, then $\det(\imace{A})=[\det(A^1),\det(A^2)]$ , where
\begin{align*}
A^1_{ij}=\begin{cases}
\unum{A}_{ii}&\mbox{if }i=j,\\
\onum{A}_{ij}&\mbox{if }i\not=j,
\end{cases}\quad
A^2_{ij}=\begin{cases}
\onum{A}_{ii}&\mbox{if }i=j,\\
\unum{A}_{ij}&\mbox{if }i\not=j.
\end{cases}
\end{align*}
\end{theorem}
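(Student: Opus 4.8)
The plan is to follow the pattern of the proofs of Theorems~\ref{thmMmatDet} and~\ref{thmInvNonnegDet}, exploiting that the determinant is monotone in each matrix entry. The key input is the standard identity, already invoked several times above, that the gradient of $\det(A)$ with respect to the entries of $A$ is $\det(A)\,A^{-T}$, equivalently $\partial\det(A)/\partial a_{ij}=\det(A)\,(A^{-1})_{ji}$.

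First I would collect the sign information that the hypothesis provides. Under the paper's convention, every $A\in\imace{A}$ is an inverse M-matrix, so $A^{-1}$ is an M-matrix; hence $(A^{-1})_{ii}>0$ for each $i$, $(A^{-1})_{ij}\le 0$ for $i\ne j$, and, since every M-matrix has positive determinant, $\det(A^{-1})>0$ and therefore $\det(A)>0$. The point to stress is that this entire sign pattern is valid simultaneously for every $A$ in the box $\imace{A}$.

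Next I would combine these facts. For any $A\in\imace{A}$ we get $\partial\det(A)/\partial a_{ii}=\det(A)(A^{-1})_{ii}>0$, so $\det$ is increasing in each diagonal entry, whereas $\partial\det(A)/\partial a_{ij}=\det(A)(A^{-1})_{ij}\le 0$ for $i\ne j$ (the relevant entry of $A^{-1}$ being genuinely off-diagonal), so $\det$ is nonincreasing in each off-diagonal entry. Since these monotonicities hold throughout $\imace{A}$, a one-coordinate-at-a-time argument shows that $\det$ is minimized over $\imace{A}$ by setting each $a_{ii}$ to its lower bound and each $a_{ij}$ ($i\ne j$) to its upper bound, i.e.\ at $A^1$, and maximized by the opposite choice, i.e.\ at $A^2$. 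Both $A^1$ and $A^2$ are vertex matrices of $\imace{A}$, hence lie in $\imace{A}$; since $\det$ is continuous and $\imace{A}$ is connected, the range $\det(\imace{A})$ is exactly the interval $[\det(A^1),\det(A^2)]$.

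I do not foresee a genuine obstacle: the whole argument rests on the uniform sign pattern of $A^{-1}$, which is immediate from the inverse M-matrix property, together with the routine monotonicity-by-coordinates reasoning. The one spot deserving care is that the off-diagonal derivative is governed by $(A^{-1})_{ij}$ with $i\ne j$, which is $\le 0$ rather than positive; this is precisely why $A^1$ combines \emph{lower} diagonal bounds with \emph{upper} off-diagonal bounds (and $A^2$ the reverse), and it is worth writing that line out explicitly to avoid a sign slip.
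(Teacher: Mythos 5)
Your proposal is correct and follows essentially the same route as the paper's proof: both use the derivative formula $\partial\det(A)/\partial A=\det(A)A^{-T}$ together with the sign pattern of $A^{-1}$ (positive determinant, positive diagonal, nonpositive off-diagonal) to conclude monotonicity of $\det$ in each entry and hence that the extrema are attained at $A^1$ and $A^2$. You merely spell out the coordinate-wise monotonicity argument in more detail than the paper does.
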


\begin{proof}
Similar to the proof of Theorem~\ref{thmMmatDet}. The derivative of the determinant $\det(A)$ is $\det(A)A^{-T}$. The determinant itself is positive, the diagonal of $A^{-T}$ is positive, and its offdiagonal is nonpositive.
\end{proof}

\section{Parametric matrices}

A parametric matrix extends the notion of an interval matrix to a broader class of matrices. A linear parametric matrix is a set of matrices
\begin{align*}
A(p)=\sum_{k=1}^K A^{(k)}p_k,
\end{align*}
where $A^{(1)},\dots,A^{(K)}\in\R^{n\times n}$ are fixed matrices and $p_1,\dots,p_K$ are parameters varying respectively in $\inum{p}_1,\dots,\inum{p}_K\in\IR$. In short, we will denote it as $A(\ivr{p})$.

Since many problems are intractable for standard interval matrices, handling parametric matrices is much more difficult task. 
On the other hand, there are several tractable cases, which we will be concerned with now.

By Hlad\'{\i}k \cite{Hla2017bc}, we have:

\begin{theorem}
$A(\ivr{p})$ is positive definite 
if and only if $A(p)$ is positive definite for each $p$ such that $p_k\in\{\unum{p}_k,\onum{p}_k\}$ $\forall k$. 
\end{theorem}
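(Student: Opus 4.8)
The plan is to exploit convexity of the set of matrices that fail to be positive definite, together with the fact that $A(\ivr{p})$ is the image of a box under a linear map. First I would observe that one direction is trivial: if $A(\ivr{p})$ is positive definite, then in particular $A(p)$ is positive definite for every vertex $p$ of the box $\inum{p}_1\times\cdots\times\inum{p}_K$. So the whole content is the converse: assuming positive definiteness at all vertices, deduce it everywhere in the box.

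For the converse, I would argue by contradiction. Suppose some $p^*$ in the box yields $A(p^*)$ not positive definite, i.e.\ there is a unit vector $x$ with $x^TA(p^*)x\leq0$. The key observation is that for a \emph{fixed} $x$, the scalar quantity $x^TA(p)x=\sum_{k=1}^K (x^TA^{(k)}x)\,p_k$ is an \emph{affine} (linear plus nothing, actually linear here) function of $p$. An affine function on a box attains its minimum at a vertex, so there is a vertex $\hat p$ with $x^TA(\hat p)x\leq x^TA(p^*)x\leq0$, contradicting positive definiteness of $A(\hat p)$. This is essentially the same monotonicity-along-edges idea used repeatedly in the paper (e.g.\ in the determinant theorems), but applied to the quadratic form rather than to the determinant directly.

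The step I expect to be the main obstacle — or at least the one needing care — is making sure the argument is not circular and handles the boundary between "positive definite" and "positive semidefinite / indefinite" correctly. One must check that the negation of "positive definite for the whole set" genuinely produces a single realization $A(p^*)$ that is not positive definite (this uses that positive definiteness of a fixed symmetric matrix is an open condition and that the box is connected/compact, so the infimum of $\lambda_{\min}$ over the box is attained), and then that the fixed witness vector $x$ can be reused across all values of $p$ — which is exactly what the linearity in $p$ buys us. A secondary subtlety is that the matrices $A^{(k)}$ need not themselves be symmetric a priori, so one should either assume (as is standard in this parametric setting) that each $A^{(k)}$ is symmetric, or replace $A(p)$ by its symmetrization $\tfrac12(A(p)+A(p)^T)$, which is again linear in $p$ and has the same quadratic form; either way the argument goes through verbatim.

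For the write-up I would structure it as: (1) the easy direction in one sentence; (2) set up the contradiction hypothesis and extract $x$; (3) invoke linearity of $p\mapsto x^TA(p)x$ and the vertex-minimum principle for affine functions on a box; (4) conclude. No heavy computation is needed — the whole proof is a short convexity argument, and the only place to be scrupulous is the logical structure of the contrapositive.
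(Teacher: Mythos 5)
Your proof is correct, and since the paper only cites Hlad\'{\i}k [Hla2017bc] for this theorem without reproducing an argument, your linearity-of-the-quadratic-form reasoning is exactly the standard proof one would expect there: for fixed $x$ the map $p\mapsto x^TA(p)x=\sum_k (x^TA^{(k)}x)p_k$ is affine on the box, hence minimized at a vertex. The only comment is that your worry about attainment of the infimum of $\lambda_{\min}$ is unnecessary: the negation of \quo{$A(\ivr{p})$ is positive definite} already hands you a single $p^*$ and a unit $x$ with $x^TA(p^*)x\leq0$ (no compactness needed), and in fact the direct inequality $x^TA(p)x\geq\min_{\hat p\ \mathrm{vertex}}x^TA(\hat p)x>0$ for each fixed unit $x$ proves the converse without any contradiction setup.
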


This reduced the problem to checking positive definiteness of $2^K$ real matrices. Provided $K$ is fixed, we arrived at a polynomial method for checking positive definiteness of $A(\ivr{p})$.

Consider now a parametric system of linear equations
\begin{align*}
A(\ivr{p})x=b(\ivr{p}),
\end{align*}
where $b(p)=\sum_{k=1}^K b^{(k)}p_k$ is a linear parametric right-hand side vector. The corresponding solution set is defined as
\begin{align*}
\Ssp\coloneqq
\{x\in\R^n\mmid \exists p\in\ivr{p}:A(p)x=b(p)\}.
\end{align*}
In contrast to ordinary interval linear systems, characterizing this solution set is a tough problem \cite{AleKre1997,AleKre2003,May2015} even for some  particular linear systems.
Nevertheless, there are some easy-to-handle situations. By Mohsenizadeh et al. \cite{MohKee2014}, under a rank one assumption,  we have a reduction to $2^K$ real systems, which is tractable for fixed number of parameters.

\begin{theorem}
If $\rank(A^{(k)})\leq1$ for every $k=1,\dots,K$, and there are no cross dependencies between the constraint matrix $A(\ivr{p})$ and the right-hand side $b(\ivr{p})$ (i.e., $A^{(k)}\not=0\ \Rightarrow\ b^{(k)}=0$), then the extremal values of $\Ssp$ are attained for $p_k\in\{\unum{p}_k,\onum{p}_k\}$, $k=1,\dots,K$.
\end{theorem}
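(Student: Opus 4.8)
The plan is to reduce the claim to a one‑parameter monotonicity statement and then chase vertices. Under regularity of $A(\ivr{p})$ (the standing assumption that makes $\ihull\Ssp$ a genuine interval vector), for every $p\in\ivr{p}$ the system $A(p)x=b(p)$ has the unique solution $x(p)\coloneqq A(p)^{-1}b(p)$, so $\Ssp=\{x(p)\mmid p\in\ivr{p}\}$ is the continuous image of the box $\ivr{p}$ and "the extremal values of $\Ssp$'' means $\min_{p\in\ivr{p}}x_i(p)$ and $\max_{p\in\ivr{p}}x_i(p)$ for each $i$. The key reduction I would use is: each coordinate $x_i(p)$ is a monotone function of each single parameter $p_j$ when the remaining parameters are held fixed (with the direction of monotonicity allowed to depend on the fixed values). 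Once this is established, the usual vertex‑chasing argument finishes the proof: starting from any $p\in\ivr{p}$, replace $p_1$ by whichever endpoint of $\inum{p}_1$ does not decrease $x_i$, then $p_2$ by such an endpoint of $\inum{p}_2$, and so on; after $K$ steps one lands at a $p$ with $p_k\in\{\unum{p}_k,\onum{p}_k\}$ for all $k$ without decreasing $x_i$, and symmetrically for the minimum.

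To prove the one‑parameter monotonicity I would differentiate $A(p)x(p)=b(p)$ with respect to $p_j$, obtaining $A^{(j)}x(p)+A(p)\,\partial_{p_j}x(p)=b^{(j)}$, i.e.\ $\partial_{p_j}x(p)=A(p)^{-1}\bigl(b^{(j)}-A^{(j)}x(p)\bigr)$. The no‑cross‑dependency hypothesis says that for each $j$ at most one of $A^{(j)},b^{(j)}$ is nonzero, so exactly one term survives and two cases arise. If $A^{(j)}=0$, then $A(p)$ does not depend on $p_j$, so $\partial_{p_j}x(p)=A(p)^{-1}b^{(j)}$ is constant in $p_j$ and $x_i(p)$ is affine, hence monotone, in $p_j$. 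If $A^{(j)}\neq0$, then by the rank‑one hypothesis $A^{(j)}=u^{(j)}(v^{(j)})^T$ and $b^{(j)}=0$, so $\partial_{p_j}x(p)=-\bigl((v^{(j)})^Tx(p)\bigr)A(p)^{-1}u^{(j)}$. Writing $A(p)=M+\theta\,u^{(j)}(v^{(j)})^T$ with $M\coloneqq A(p)|_{p_j=p_j^0}$ (invertible by regularity) for a fixed reference $p_j^0\in\inum{p}_j$ and $\theta\coloneqq p_j-p_j^0$, the Sherman--Morrison formula gives $A(p)^{-1}u^{(j)}=\tfrac{1}{1+\theta\beta}M^{-1}u^{(j)}$ and $(v^{(j)})^Tx(p)=\tfrac{1}{1+\theta\beta}(v^{(j)})^TM^{-1}b$, where $\beta\coloneqq(v^{(j)})^TM^{-1}u^{(j)}$ and $b=b(p)$ is itself constant in $p_j$; moreover $1+\theta\beta\neq0$ on $\inum{p}_j$ since $\det A(p)=\det M\cdot(1+\theta\beta)$ and $A(\ivr{p})$ is regular. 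Hence $\partial_{p_j}x_i(p)=-\bigl((v^{(j)})^TM^{-1}b\bigr)(M^{-1}u^{(j)})_i/(1+\theta\beta)^2$, whose numerator is independent of $p_j$ and whose denominator is a nonvanishing square, so the derivative has constant sign on $\inum{p}_j$ and $x_i(p)$ is again monotone in $p_j$.

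The only genuinely computational step is this Sherman--Morrison sign computation in the rank‑one case; the rest is the bookkeeping of the two‑case split handed to us by the hypothesis plus the routine vertex argument. The point that will need care is singularity of $A(\ivr{p})$: without regularity the map $p\mapsto A(p)^{-1}b(p)$ is undefined at singular realizations, $\Ssp$ can be unbounded near them, and the extrema need not be finite, so the statement should be understood with $A(\ivr{p})$ regular as a standing assumption (as is customary when speaking of $\ihull\Ssp$), or else the monotonicity has to be re‑derived on the lifted polyhedron $\{(x,p)\mmid A(p)x=b(p),\ p\in\ivr{p}\}$ via recession cones — the fibers being still affine (respectively Möbius) in each $p_j$ — in which case the same vertex conclusion holds with $\pm\infty$ permitted.
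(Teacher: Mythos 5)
Your proof is correct. The paper states this theorem without any proof, merely citing Mohsenizadeh et al., so there is no in-paper argument to compare against; but your derivation is complete and follows the standard route for this result: under the hypotheses each solution coordinate $x_i(p)$, viewed as a function of a single parameter $p_j$ with the others fixed, is either affine (when $A^{(j)}=0$, since then only $b(p)$ depends on $p_j$) or a M\"obius function of the form $c_i-\theta d_i/(1+\theta\beta)$ with nonvanishing denominator (when $A^{(j)}=u v^T$ and $b^{(j)}=0$, via Sherman--Morrison and the determinant identity $\det(M+\theta uv^T)=\det M\,(1+\theta\beta)$), hence monotone on $\inum{p}_j$, and coordinatewise vertex-chasing then places the extrema at vertices of the box. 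You also handle the two delicate points correctly: the greedy endpoint replacement starting from an \emph{arbitrary} $p$ produces a vertex at least as good, so the supremum over the finitely many vertices dominates the supremum over the whole box (making a separate compactness step unnecessary); and regularity of $A(\ivr{p})$ must indeed be read as a standing assumption for the extremal values to be finite, an assumption the printed statement leaves implicit.
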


Another reduction to  $2^K$ real linear systems can be performed based on the result by Popova \cite{Pop2009}. 
\begin{theorem}
If each parameter is involved in one equation only, then $\Ssp$ is described by
\begin{align*}
|A(\Mid{p})x-b(\Mid{p})| \leq \sum_{k=1}^K\Rad{p}_k|A^{(k)}x-b^{(k)}|.
\end{align*}
\end{theorem}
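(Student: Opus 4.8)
If each parameter $p_k$ is involved in one equation only, then $\Ssp$ is described by
\[
|A(\Mid{p})x-b(\Mid{p})| \leq \sum_{k=1}^K\Rad{p}_k|A^{(k)}x-b^{(k)}|.
\]

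The plan is to exploit the fact that, under the hypothesis, the defining system $A(p)x=b(p)$ decouples into $n$ scalar equations, each of which depends only on a disjoint block of the parameters. Concretely, I would first partition the index set $\{1,\dots,K\}$ according to which equation each parameter influences: let $\mathcal{K}_i$ be the set of parameters appearing in the $i$th equation (so the $\mathcal{K}_i$ are pairwise disjoint, and parameters not appearing anywhere can be discarded). For a fixed $x\in\R^n$, the $i$th equation $A(p)_{i*}x = b(p)_i$ reads, after collecting the parametric contributions,
\[
A(\Mid{p})_{i*}x - b(\Mid{p})_i + \sum_{k\in\mathcal{K}_i}(p_k-\Mid{p}_k)\bigl(A^{(k)}_{i*}x - b^{(k)}_i\bigr) = 0,
\]
because the terms with $k\notin\mathcal{K}_i$ have $A^{(k)}_{i*}=0$ and $b^{(k)}_i=0$ by hypothesis.

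Next I would observe that, as $p$ ranges over the box $\ivr{p}$ with $p_k - \Mid{p}_k \in [-\Rad{p}_k, \Rad{p}_k]$, the left-hand side of the $i$th equation sweeps out exactly the interval
\[
\Bigl[\,A(\Mid{p})_{i*}x - b(\Mid{p})_i - \!\!\sum_{k\in\mathcal{K}_i}\!\!\Rad{p}_k\,\bigl|A^{(k)}_{i*}x - b^{(k)}_i\bigr|,\ \ A(\Mid{p})_{i*}x - b(\Mid{p})_i + \!\!\sum_{k\in\mathcal{K}_i}\!\!\Rad{p}_k\,\bigl|A^{(k)}_{i*}x - b^{(k)}_i\bigr|\Bigr],
\]
since each summand $(p_k-\Mid{p}_k)(A^{(k)}_{i*}x - b^{(k)}_i)$ ranges independently over $[-\Rad{p}_k|A^{(k)}_{i*}x-b^{(k)}_i|,\ \Rad{p}_k|A^{(k)}_{i*}x-b^{(k)}_i|]$ and the sum of intervals is the interval of sums. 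Hence $x$ solves the $i$th equation for \emph{some} admissible choice of the parameters in $\mathcal{K}_i$ if and only if $0$ lies in that interval, i.e.
\[
\bigl|A(\Mid{p})_{i*}x - b(\Mid{p})_i\bigr| \leq \sum_{k\in\mathcal{K}_i}\Rad{p}_k\,\bigl|A^{(k)}_{i*}x - b^{(k)}_i\bigr| = \sum_{k=1}^{K}\Rad{p}_k\,\bigl|A^{(k)}_{i*}x - b^{(k)}_i\bigr|,
\]
the last equality again using that $A^{(k)}_{i*}x - b^{(k)}_i = 0$ for $k\notin\mathcal{K}_i$.

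Finally I would assemble the $n$ scalar conditions into the vector inequality. The key point that makes this work — and the only place the single-equation hypothesis is really used — is that the parameter blocks $\mathcal{K}_1,\dots,\mathcal{K}_n$ are disjoint, so a choice of parameters making the $i$th equation solvable can be made \emph{independently} for each $i$; there is no conflict forcing a single $p_k$ to serve two equations at once. Therefore $x\in\Ssp$ (there exists $p\in\ivr{p}$ satisfying all equations simultaneously) if and only if for every $i$ the scalar inequality holds, which is precisely the asserted entrywise inequality $|A(\Mid{p})x-b(\Mid{p})| \leq \sum_{k=1}^K\Rad{p}_k|A^{(k)}x-b^{(k)}|$. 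I expect the main (though modest) obstacle to be bookkeeping clarity: one must be careful that "involved in one equation only" is interpreted as "$A^{(k)}$ and $b^{(k)}$ are supported on a single row," so that the cross terms genuinely vanish and the decoupling is exact; once that is pinned down the argument is a routine interval-arithmetic computation together with the disjointness observation.
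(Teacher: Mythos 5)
Your argument is correct: the row-wise Oettli--Prager computation combined with the observation that the disjointness of the parameter blocks $\mathcal{K}_1,\dots,\mathcal{K}_n$ lets the existential choices be made independently for each equation is exactly the standard derivation behind this result, which the paper itself only cites from Popova without proof. No gaps; your closing caveat about interpreting ``involved in one equation only'' as $A^{(k)}$ and $b^{(k)}$ being supported on a single common row is precisely the right reading.
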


Let $z\in\{\pm1\}^K$ and consider the restriction of $\Ssp$ to the set described by $z_k(A^{(k)}x-b^{(k)})\geq0$, $k=1,\dots,K$. This restricted set has simplified description
\begin{align*}
 A(\Mid{p})x-b(\Mid{p}) &\leq \sum_{k=1}^K\Rad{p}_kz_k(A^{(k)}x-b^{(k)}),\\
-A(\Mid{p})x+b(\Mid{p}) &\leq \sum_{k=1}^K\Rad{p}_kz_k(A^{(k)}x-b^{(k)}),\\
z_k(A^{(k)}x-b^{(k)})&\geq0,\quad  k=1,\dots,K.
\end{align*}
This is a system of linear inequalities, which is efficiently processed via linear programming. Again, we got a reduction to $2^K$, which is a polynomial case provided $K$ is fixed.

\section*{Conclusion}

In this paper, we briefly surveyed interval versions of selected special types of matrices and their useful properties. In particular, we highlighted the properties and characteristics that are efficiently computable even in the interval context. We were motivated by the fact that matrices appearing in applications are not general, but usually have some special structure. Utilizing this special form may in turn radically reduce computational complexity of problems involving the matrices. 



\bibliographystyle{spmpsci}
\bibliography{persp_compl}

\end{document}